\newcommand{\C}{\mathbb{C}}
\newcommand{\Z}{\mathbb{Z}}
\newtheorem{theorem}{Theorem}[section]
\newtheorem{remark}[theorem]{Remark}
\newtheorem{corollary}[theorem]{Corollary}
\newtheorem{question}[theorem]{Question}
\newtheorem{proposition}[theorem]{Proposition}
\newtheorem{lemma}[theorem]{Lemma}
\newtheorem{definition}[theorem]{Definition}
\newtheorem{definition/lemma}[theorem]{Definition/Lemma}
\DeclareMathOperator{\ad}{ad} 
\DeclareMathOperator{\Ad}{Ad}
\DeclareMathOperator{\Exp}{Exp}
\DeclareMathOperator{\End}{End}
\DeclareMathOperator{\Aut}{Aut}
\title[Jacobson--Morozov]{Toward a Jacobson--Morozov theorem for Kac--Moody Lie algebras}
\author{Sam Jeralds}
\address{Department of Mathematics, University of North Carolina, Chapel Hill, NC 27599}
\email{sjj280@live.unc.edu}
\begin{document}
\maketitle

\begin{abstract} 
For a finite-dimensional semisimple Lie algebra $\mf[g]$, the Jacobson--Morozov theorem gives a construction of subalgebras $\mf[sl]_2 \subset \mf[g]$ corresponding to nilpotent elements of $\mf[g]$. In this note, we propose an extension of the Jacobson--Morozov theorem to the symmetrizable Kac--Moody setting and give a proof of this generalization in the case of rank two hyperbolic Kac--Moody algebras. We also give a proof for an arbitrary symmetrizable Kac--Moody algebra under some stronger restrictions.  
\end{abstract}

\section{Introduction}

Let $\mf[g]$ be a finite-dimensional, semisimple Lie algebra over $\C$. In this note, we are concerned about embeddings $\mf[sl]_2 \hookrightarrow \mf[g]$. In particular, we want to consider not just the embeddings but also the image of the ``nil-positive" element of $\mf[sl]_2$, as given by the following definition. 

\begin{definition} For any element $e \in \mf[g]$, we say that $e$ can be \emph{extended into an $\mf[sl]_2$-triple} if there is a Lie algebra homomorphism $\theta_e: \mf[sl]_2(\C) \to \mf[g]$ such that 
$$
\theta_e \left( \begin{pmatrix} 0 & 1 \\ 0  & 0 \end{pmatrix} \right) = e.
$$
\end{definition}

Prototypical examples of elements which can be extended into $\mf[sl]_2$-triples are the root vectors $e_\beta \in \mf[g]_\beta$, where $\mf[g]_\beta$ is the $\beta$-root space of $\mf[g]$ (see, for example, \cite{Hum}). These root vectors also have the common property of being nilpotent elements of $\mf[g]$; that is, the associated adjoint operator $\ad_{e_\beta} \in \End(\mf[g])$ is nilpotent. 

A classical result in Lie theory, known as the Jacobson--Morozov theorem, says that these two properties are linked for arbitrary elements in $\mf[g]$: 

\begin{theorem}[Jacobson--Morozov]

Let $e \in \mf[g]$ be a nilpotent element. Then $e$ can be extended into an $\mf[sl]_2$-triple. 
\end{theorem}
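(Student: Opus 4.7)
The plan is to construct, in two steps, elements $h, f \in \mathfrak{g}$ completing $e$ to an $\mathfrak{sl}_2$-triple satisfying $[h,e] = 2e$, $[h,f] = -2f$, and $[e,f] = h$. First I would produce a neutral element $h$; then I would use $h$ to construct $f$.

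For Step~1, I would begin with the observation that $e \in [e,\mathfrak{g}]$. Indeed, for any $x \in \ker(\ad_e)$ the operators $\ad_e$ and $\ad_x$ commute, and because $\ad_e$ is nilpotent the product $\ad_e \ad_x$ is nilpotent; hence $\kappa(e,x) = \tr(\ad_e \ad_x) = 0$, where $\kappa$ denotes the Killing form. Using invariance of $\kappa$ one checks $[e,\mathfrak{g}]^\perp = \ker(\ad_e)$, so the above orthogonality gives $e \in [e, \mathfrak{g}]$ via non-degeneracy. The harder half of Step~1 is to upgrade this existence statement to produce an element $h \in [e, \mathfrak{g}]$ satisfying the precise eigenvalue condition $[h,e] = 2e$. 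My approach would be to work inside a solvable subalgebra containing $e$, applying Lie's theorem to find a candidate semisimple element, or alternatively to exploit the Fitting decomposition relative to a carefully chosen element and then correct by elements of $\ker(\ad_e)$.

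For Step~2, write $h = [e, y]$ for the $h$ produced in Step~1 and decompose $\mathfrak{g} = \bigoplus_k \mathfrak{g}_k$ into generalized $\ad_h$-eigenspaces. The identity $[\ad_h, \ad_e] = \ad_{[h,e]} = 2\ad_e$ shows that $\ad_e$ shifts generalized weights by $+2$. Writing $y = \sum_k y_k$ and using $h \in \mathfrak{g}_0$, equating weight-$0$ parts of $h = [e,y]$ forces $h = [e, y_{-2}]$. A short computation---checking that $(\ad_h + 2)\,y_{-2}$ lies in $\ker(\ad_e) \cap \mathfrak{g}_{-2}$ and then adjusting $y_{-2}$ by an element of this intersection---shows that $f := y_{-2}$ satisfies $[h,f] = -2f$, completing the $\mathfrak{sl}_2$-triple. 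Semisimplicity of $h$ (not assumed at the outset) then drops out a posteriori from the $\mathfrak{sl}_2$ relations.

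The substantive content, and the main obstacle, lies entirely in Step~1. Both the non-degeneracy of the Killing form on $\mathfrak{g}$ and the finite-dimensionality used in the Fitting or Lie's theorem argument are essential and fail in the symmetrizable Kac--Moody setting---which is precisely the reason why the extension pursued in this paper demands a new strategy.
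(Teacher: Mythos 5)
The paper does not actually prove this statement: it is quoted as the classical Jacobson--Morozov theorem for finite-dimensional semisimple $\mathfrak{g}$, with the reader sent to the literature (\cite{Hum}, \cite{CM}), so there is no in-paper argument to measure yours against. Judged on its own terms, your outline is the standard classical route (Killing-form orthogonality followed by Morozov's lemma), and the pieces you actually write out are correct: $\kappa(e,x)=\operatorname{tr}(\ad_e\ad_x)=0$ for $x\in\ker(\ad_e)$ because $\ad_e\ad_x$ is nilpotent, the identification $[e,\mathfrak{g}]^{\perp}=\ker(\ad_e)$ via invariance and non-degeneracy, hence $e\in[e,\mathfrak{g}]$; and the weight-shifting computation in Step~2 that isolates $y_{-2}$ is the right shape.

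The genuine gap is the one you half-acknowledge: producing $h\in[e,\mathfrak{g}]$ with $[h,e]=2e$. From $e\in[e,\mathfrak{g}]$ you only get some $z$ with $[z,e]=-e$, so $h':=-2z$ has the correct eigenvalue, but nothing you say places $h'$, or any translate of it by $\ker(\ad_e)$, inside $[e,\mathfrak{g}]$; and ``apply Lie's theorem inside a solvable subalgebra'' or ``use the Fitting decomposition and correct by $\ker(\ad_e)$'' are placeholders rather than arguments --- this is exactly the point where every textbook proof does its real work. The standard resolution (see \cite{CM}, Section~3.3) is an induction on $\dim\mathfrak{g}$ with a case split on whether $\ker(\ad_e)$ contains a nonzero semisimple element $s$: if so, one passes to the proper reductive centralizer $\mathfrak{g}^{s}$, which contains $e$ in its semisimple part, and concludes by induction; if not, every element of $\ker(\ad_e)$ is nilpotent, Engel's theorem forces $\kappa\bigl(\ker(\ad_e),\ker(\ad_e)\bigr)=0$, hence $\ker(\ad_e)\subseteq[e,\mathfrak{g}]$, and a further argument produces the required $h$. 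Relatedly, in Step~2 the ``adjustment'' of $y_{-2}$ requires $\ad_h+2$ to be invertible on $\ker(\ad_e)\cap\mathfrak{g}_{-2}$; that invertibility is the actual content of Morozov's lemma and is precisely where the hypothesis $h\in[e,\mathfrak{g}]$ enters, so it should not be waved through as a short computation. Your closing observation about why both ingredients fail for symmetrizable Kac--Moody algebras is apt, and is consistent with the paper's decision to proceed instead by root combinatorics and $\pi$-systems.
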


Since its discovery, the Jacobson--Morozov theorem has been adapted to far-reaching generalizations beyond characteristic zero (as in \cite{Prem}) and semisimple Lie algebras (as in \cite{AK} and \cite{OSul}). The Jacobson--Morozov theorem has, over the last six decades, played a vital role in the study of the structure theory of semisimple Lie algebras, representation theory, and kickstarted through the work of Kostant and many others the study of the geometry of nilpotent orbits and geometric representation theory; for some historical context and in-depth treatment of these, we refer to \cite{CM} and \cite{CG}. 

Now, let $\mf[g]$ be a symmetrizable Kac--Moody algebra, a (typically infinite-dimensional) generalization of a semisimple Lie algebra. Our goal is to develop an analogue of the Jacobson--Morozov theorem for suitable elements $e \in \mf[g]$ that recovers the classical result when $\mf[g]$ is finite-dimensional. To do so, we must overcome the technical difficulties that accompany the infinite-dimensional behavior of $\mf[g]$, including determining a suitable analogue of nilpotent elements. The main results of this note are given in the following theorem (cf. Theorems \ref{rank 2 theorem}, \ref{regdomthm} in the text); see Section \ref{S2} and Question \ref{Q1} for notation, conventions, and  context. 

\begin{theorem} Let $\mf[g]$ be a symmetrizable Kac--Moody algebra, $w \in W$ an element of the Weyl group $W$, and $d \in \Z_{\geq 1}$. Then any element $e \in \mf[g]_w \cap \mf[g]_d$ can be extended into an $\mf[sl]_2$-triple where 
\begin{itemize}
\item[(a)] $\mf[g]$ is a rank two hyperbolic Kac--Moody algebra, or 
\item[(b)] $\mf[g]$ is any symmetrizable Kac--Moody algebra and there is a regular grading on $\mf[g]$. 
\end{itemize}
\end{theorem}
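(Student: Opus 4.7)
The plan is to mimic the classical Jacobson--Morozov proof, but adapted so that the grading (via $\mathfrak{g}_d$) and the finite-dimensionality of $\mathfrak{g}_w$ compensate for the lack of a structural notion of nilpotence in the full Kac--Moody algebra. Given $e \in \mathfrak{g}_w \cap \mathfrak{g}_d$, the plan is to first produce a semisimple $h$ with $[h,e] = 2e$, and then to find $f \in \mathfrak{g}_{-d}$ with $[e,f] = h$ and $[h,f] = -2f$.

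For the construction of $h$: when the grading on $\mathfrak{g}$ comes from an element of the Cartan subalgebra $\mathfrak{h}$ --- as is true for the standard root grading and as is essentially built into the hypothesis of ``regular grading'' in (b) --- there exists $h_{\mathrm{gr}} \in \mathfrak{h}$ with $[h_{\mathrm{gr}}, x] = \deg(x)\cdot x$ on each homogeneous component. Setting $h = (2/d)\, h_{\mathrm{gr}}$ then yields $[h,e] = 2e$ automatically, which settles half of the required relations and fixes $h$ inside the Cartan.

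To produce $f$, I would examine $\ad_e \colon \mathfrak{g}_{-d} \to \mathfrak{g}_0$ and try to show that $h$ lies in its image. Any preimage $f_0$ of $h$ can then be replaced by its $(-2)$-eigencomponent for $\ad h$ acting on $\mathfrak{g}_{-d}$; this component still satisfies $[e,f] = h$ (since $\ad h$ kills $h$) and now also $[h,f] = -2f$ by construction. For part (a), I would induct on the length $\ell(w)$: the base case $\ell(w)=1$ reduces to a single real root space $\mathfrak{g}_{\alpha_i}$ where the classical $\mathfrak{sl}_2$-triple $(e_i,h_i,f_i)$ does the job. For the inductive step, if $s_i w < w$, conjugation by a Weyl-group lift of $s_i$ sends $\mathfrak{g}_w$ into $\mathfrak{g}_{s_i w}$ while preserving gradings and the existence of $\mathfrak{sl}_2$-triples, so the induction closes. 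The low-rank combinatorics of a rank two hyperbolic root system, together with the fact that $\mathfrak{n}_w$ consists only of real root spaces, should keep this reduction tractable. For part (b), the regular grading hypothesis should be precisely what guarantees that $\ad_e|_{\mathfrak{g}_{-d}}$ surjects onto the one-dimensional span of $h$ inside $\mathfrak{g}_0$, most likely via a non-degeneracy argument using the invariant symmetric form pairing $\mathfrak{g}_d$ with $\mathfrak{g}_{-d}$.

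The main obstacle, as in the classical proof, is exactly this surjectivity step: exhibiting $h \in \ad_e(\mathfrak{g}_{-d})$. Classically one invokes Jacobson's lemma on the nilpotent operator $\ad_e$ acting on the finite-dimensional space $\mathfrak{g}$, but here $\mathfrak{g}$ is infinite-dimensional and $\ad_e$ need not be locally nilpotent on it. The key will be to identify a finite-dimensional, $\ad_e$-stable subspace $V \subset \mathfrak{g}$ containing $h$ on which $\ad_e$ really is nilpotent, and apply Jacobson's lemma there. I expect such a $V$ to arise from $\mathfrak{g}_w$ together with its ``negative'' counterpart in the rank two hyperbolic case of (a), and to be delivered essentially by the definition of regularity in case (b); identifying and controlling this $V$ is where I would expect to spend most of the technical work.
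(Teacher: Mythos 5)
Your proposal has a genuine gap at its very first step: the choice $h=(2/d)\,h_{\mathrm{gr}}$. In an $\mathfrak{sl}_2$-triple the semisimple element is forced to lie in $\Image(\ad_e)$, and it must therefore be adapted to $e$, not to the grading; a rescaled grading element will in general not work. Already for $\mathfrak{g}=\mathfrak{sl}_3$ with the principal (regular) grading $\alpha_1(h_{\mathrm{gr}})=\alpha_2(h_{\mathrm{gr}})=1$, $w=s_1$, $d=1$, and $e=e_1\in\mathfrak{g}_{\alpha_1}$: any $f$ with $[e,f]\in\mathfrak{h}$ has its only contributing component in $\mathfrak{g}_{-\alpha_1}$, so $\Image(\ad_e)\cap\mathfrak{h}=\C\alpha_1^\vee$, and the condition $\alpha_1(h)=2$ forces $h=\alpha_1^\vee$; but $\alpha_2(2h_{\mathrm{gr}})=2\neq -1=\alpha_2(\alpha_1^\vee)$, so your candidate $h$ can never be completed to a triple. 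The surjectivity step you correctly identify as ``the main obstacle'' is thus not merely technical --- it fails for the $h$ you fixed --- and the proposal does not supply the finite-dimensional $\ad_e$-stable space $V$ on which a Jacobson-lemma argument would run. Separately, the induction on $\ell(w)$ in part (a) does not close: conjugating by a lift of $s_i$ permutes the spaces $\mathfrak{g}_w$ correctly but does \emph{not} preserve the $\tau$-grading unless $\alpha_i(\dot{\tau})=0$, so $\mathfrak{g}_{s_iw}\cap\mathfrak{g}_d$ is not carried to $\mathfrak{g}_w\cap\mathfrak{g}_{d'}$ for a single $d'$ in general.

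For contrast, the paper never constructs the triple by hand inside the infinite-dimensional algebra. For (a) it exploits explicit knowledge of the real roots of $\mathcal{H}(a,b)$ (generalized Fibonacci coefficient sequences, which are strictly increasing) to show that $\mathfrak{g}_w\cap\mathfrak{g}_d$ is, outside two exceptional families occurring only when $b=1$, a \emph{single real root space}, so $e$ is a real root vector and the triple is immediate; the exceptional two-dimensional cases are moved back to a single root space by the automorphisms $\Ad(\Exp(\tfrac{y}{x}e_2))$ and $s_2(\ad)$, the latter being admissible precisely because $\alpha_2(\dot{\tau})=0$ there. For (b) it shows that $\Phi_w^d$ is a $\pi$-system of finite type, so that $e$ lies in an embedded finite-dimensional \emph{semisimple} subalgebra $\mathfrak{g}(B)\hookrightarrow\mathfrak{g}$, where the classical Jacobson--Morozov theorem applies. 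Your instinct that one must locate a finite-dimensional home for the classical argument is the right one; the missing idea is that this home should be a semisimple subalgebra built from the roots supporting $e$ (via the $\pi$-system/regular-subalgebra formalism), not an $\ad_e$-stable subspace of $\mathfrak{g}$ on which to imitate Jacobson's lemma.
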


The proofs of these results, occupying Sections \ref{S3} and \ref{S4}, are primarily combinatorial in nature and follow from a careful study of the real roots of $\mf[g]$. In particular, we make extensive use of the complete knowledge of the real roots of rank two Kac--Moody algebras, and in the general symmetrizable case the notion of \emph{regular subalgebras} or \emph{$\pi$-systems}. Regular subalgebras have a long history in the study of semisimple Lie algebras, beginning with Dynkin \cite{Dyn}, and have appeared alongside the Jacobson--Morozov theorem in the study of $\mf[sl]_2$ subalgebras. We present here, to our knowledge, the first explicit use of these $\pi$-systems to construct $\mf[sl]_2$-triples in the general Kac--Moody setting. 

%


\subsection*{Acknowledgements}
We are deeply indebted to Shrawan Kumar, who first suggested this project and offered patient guidance throughout.

\section{Kac--Moody algebras, gradings, and locally nilpotent elements} \label{S2}

\subsection{Notation and preliminaries}
Throughout, $\mf[g]=\mf[g](A)$ will denote a symmetrizable Kac--Moody Lie algebra with generalized Cartan matrix $A$. Because $\mf[g]$ is symmetrizable, we have an invariant, symmetric bilinear form $(\cdot | \cdot)$ on $\mf[g]$. We denote by $\mf[h] \subset \mf[b] \subset \mf[g]$ a choice of Cartan subalgebra and (positive) Borel subalgebra of $\mf[g]$, respectively. We fix a root system $\Phi$ of $\mf[g]$ with positive (resp., negative) roots $\Phi^+$ (resp., $\Phi^-$) consistent with the choice of Cartan and Borel subalgebras. The simple roots are denoted by $\{\alpha_i\} \subset \Phi$. We also write $\Phi = \Phi_{Re} \sqcup \Phi_{Im}$, the disjoint union of the roots into the real and imaginary roots, respectively. With this, we have the root space decomposition of $\mf[g]$ under the adjoint action of $\mf[h]$ given by 
$$
\mf[g]=\mf[h] \oplus \bigoplus_{\beta \in \Phi} \mf[g]_\beta.
$$
Finally, we denote by $W$ the Weyl group of $\mf[g]$, with generators the simple reflections $\{s_i\}$, which act on simple roots via $s_i(\alpha_j)=\alpha_j-\alpha_j(\alpha_i^\vee)\alpha_i=\alpha_j-A_{ij}\alpha_i$.

\subsection{Gradings and locally nilpotent elements of Kac--Moody algebras}

As the statement of the Jacobson--Morozov theorem for semisimple Lie algebras is concerned with nilpotent elements, a first step in generalizing to the symmetrizable setting is finding a suitable analogue of this notion. By the adjoint action, we can still consider the linear operator $\ad_x \in \End(\mf[g])$; however, since $\mf[g]$ is generically infinite-dimensional, considering those $x \in \mf[g]$ for which $\ad_x$ is nilpotent is too restrictive. Instead, we relax this condition with the following definition. 

\begin{definition} An element $x \in \mf[g]$ is called \emph{locally nilpotent} if, for any $y \in \mf[g]$, there exists some $N_y \in \Z_{\geq 0}$ such that $\ad_x^{N_y}(y)=0$. 
\end{definition}

Note that when $\mf[g]$ is finite-dimensional, any locally nilpotent element is nilpotent. In general, prototypical examples of locally nilpotent elements are homogeneous root vectors $x_\beta \in \mf[g]_\beta$ where $\beta \in \Phi_{Re}$. In order to set up a more general construction of locally nilpotent elements in which we will be interested, we first introduce the following combinatorial definition. 

\begin{definition} For a symmetrizable Kac--Moody algebra $\mf[g]$ with Weyl group $W$, define for an element $w \in W$ the set 
$$
\Phi_w:=\{\beta \in \Phi^+: w^{-1}(\beta) \in \Phi^-\}.
$$
\end{definition}

That is, $\Phi_w$ is the set of positive roots that are ``inverted" to negative roots by $w^{-1}$. These sets, appropriately termed \emph{inversion sets}, appear throughout combinatorics and have applications to representation theory and the geometry of flag varieties. One particular well-known property that follows directly from the definition is that $\Phi_w$ is a  \textit{closed set} of roots; that is, if $\alpha$, $\beta \in \Phi_w$ and $\alpha+\beta \in \Phi$, then necessarily $\alpha+\beta \in \Phi_w$. It therefore makes sense to define a subalgebra $\mf[g]_w \subset \mf[g]$ via 
$$
\mf[g]_w:= \bigoplus_{\beta \in \Phi_w} \mf[g]_\beta.
$$
We will from now on restrict our attention to elements $e \in \mf[g]_w$ when considering an analogue of the Jacobson--Morozov theorem, via the following result taken from \cite{Ku2}*{Theorem 10.2.5}. 

\begin{theorem} Fix an element $e \in \mf[g]_w$ with $\mf[g]_w$ as above. Then $e$ is locally nilpotent in $\mf[g]$. 
\end{theorem}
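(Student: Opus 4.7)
My plan is to reduce local ad-nilpotence of $e$ on the infinite-dimensional $\mf[g]$ to an Engel-type statement on a finite-dimensional subspace. First I would record the structural properties of $\mf[g]_w$: the set $\Phi_w$ is finite of cardinality $\ell(w)$ and consists exclusively of positive \emph{real} roots, because $W$ permutes the positive imaginary roots $\Phi^+_{Im}$ so no element of $\Phi^+_{Im}$ can be sent into $\Phi^-$ by $w^{-1}$. Consequently $\mf[g]_w$ is a finite-dimensional subalgebra of $\mf[n]^+$, and since brackets strictly increase height inside it, $\mf[g]_w$ is a nilpotent Lie algebra. Decomposing $y$ by the weight decomposition of $\mf[g]$ under $\mf[h]$, I would further reduce to a single homogeneous $y \in \mf[g]_\mu$.

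Next I would consider the cyclic $U(\mf[g]_w)$-module $M := U(\mf[g]_w)\cdot y \subseteq \mf[g]$ under the adjoint action. The weights of $M$ lie in
$$
S \;:=\; \Bigl(\mu + \sum_{\beta \in \Phi_w} \Z_{\geq 0}\beta\Bigr)\cap (\Phi\cup\{0\}),
$$
and as weight multiplicities of $\mf[g]$ are finite, showing that $S$ is finite would imply $\dim M < \infty$. Granted finiteness of $M$, every element of $\mf[g]_w$ acts on $M$ by an operator that strictly raises height (its weights are positive roots), so by an Engel-style argument $e$ acts nilpotently on $M$, giving $\ad_e^N(y) = 0$ for some $N$.

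The main obstacle is the finiteness of $S$. My strategy is to transport the problem via a lift $\tilde{w}^{-1}\in\Aut(\mf[g])$ of $w^{-1}$ (constructed from the associated Kac--Moody group): since $w^{-1}(\Phi_w) = -\Phi_{w^{-1}}$ consists of negative real roots, finiteness of $S$ becomes finiteness of
$$
S' \;:=\; \Bigl(w^{-1}\mu - \sum_{\gamma \in \Phi_{w^{-1}}}\Z_{\geq 0}\gamma\Bigr) \cap (\Phi\cup\{0\}).
$$
The elements of $S'$ in $\Phi^+\cup\{0\}$ are bounded above by $w^{-1}\mu$ in the root-lattice partial order, hence finite. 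For the elements in $\Phi^-$, I would combine the bound $(\lambda|\lambda) \leq \max_i(\alpha_i|\alpha_i)$ satisfied by every root with the expansion of $(\lambda|\lambda)$ applied to $\lambda = w^{-1}\mu - \sum m_i\gamma_i$: the quadratic-in-$(m_i)$ contribution $(\sum m_i\gamma_i\,|\,\sum m_i\gamma_i)$ should dominate and force a contradiction once $\sum m_i$ is large.

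The hard part will be securing this quadratic lower bound, which amounts to positive-definiteness of the Gram matrix $\bigl((\gamma_i|\gamma_j)\bigr)$ on the cone of nonnegative combinations; this can fail along null directions in degenerate situations (e.g.\ the $\delta$-direction in affine type). I anticipate needing to supplement the argument either by induction on $\ell(w)$ via the decomposition $\Phi_{s_iw} = \{\alpha_i\}\sqcup s_i(\Phi_w)$ together with the standard fact that simple root vectors act locally nilpotently on $\mf[g]$ (from integrability of the adjoint representation), or by embedding the question weight-by-weight into an integrable highest weight module $V(\Lambda)$, where the analogous weight sets $(\mu + \sum \Z_{\geq 0}\beta)\cap(\Lambda - Q^+)$ are patently finite because $\sum \Z_{\geq 0}\beta \subset Q^+$ is bounded by $\Lambda - \mu$ in partial order.
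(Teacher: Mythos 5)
The paper does not actually prove this statement; it imports it from \cite{Ku2}*{Theorem 10.2.5}, so I am evaluating your argument on its own. Your reductions are correct and are the right skeleton: $\Phi_w$ is a finite closed set of $\ell(w)$ positive real roots, $\mf[g]_w$ is a finite-dimensional nilpotent subalgebra, and once $M:=U(\mf[g]_w)\cdot y$ is known to be finite-dimensional for homogeneous $y\in\mf[g]_\mu$, the fact that $\ad_e$ shifts weights by elements of $\Phi_w$ (hence strictly increases height) kills $M$ in finitely many steps. The genuine gap is the one you flag yourself: finiteness of $S$ via a quadratic lower bound on $(\sum m_i\gamma_i\,|\,\sum m_i\gamma_i)$. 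Positivity of the Gram form on the nonnegative cone of $\Phi_{w^{-1}}$ amounts to showing that this cone avoids the isotropic and imaginary directions, which requires a separate (and not obvious) argument about inversion sets; and your second fallback cannot rescue it, because the adjoint representation is integrable but not a highest weight module, so its weights are not bounded above by any $\Lambda$ and the set $(\mu+\sum\Z_{\geq0}\beta)\cap\Phi$ is exactly the thing in question.

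The good news is that your own transport-by-$w$ idea already finishes the proof with no quadratic form at all. Write $Q^+=\sum_i\Z_{\geq0}\alpha_i$ and $\lambda\leq\nu$ when $\nu-\lambda\in Q^+$. For $\nu=\mu+p\in S$ with $p\in\sum_{\beta\in\Phi_w}\Z_{\geq0}\beta\subset Q^+$ there are three cases. If $\nu\in\Phi^-\cup\{0\}$, then $p\leq-\mu$, and $\{p\in Q^+:p\leq-\mu\}$ is finite. If $\nu\in\Phi^+$ and $w^{-1}\nu\in\Phi^-$, then $\nu\in\Phi_w$, a set of size $\ell(w)$. If $\nu\in\Phi^+$ and $w^{-1}\nu\in\Phi^+$, then since $w^{-1}p\in-Q^+$ (each $w^{-1}\beta$ with $\beta\in\Phi_w$ is a negative root) we get $0\leq-w^{-1}p\leq w^{-1}\mu$, so only finitely many $p$ occur. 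Hence $S$ is finite. In your transported picture this is the observation that the problematic set $S'\cap\Phi^-$ pulls back under $w$ into $\Phi^-\cup\Phi_w$, both of whose contributions are finite for the same elementary reasons you already gave for the positive part. Your first fallback, induction on $\ell(w)$ via $\Phi_{s_iw}=\{\alpha_i\}\sqcup s_i(\Phi_w)$, is also viable and is essentially the textbook route, but as stated it is incomplete: local nilpotence of $e_i$ and of $s_i(x)$ separately does not imply local nilpotence of $ce_i+s_i(x)$. You would need to strengthen the inductive hypothesis to local finiteness of $\mf[g]$ as a $\mf[g]_w$-module, using that $s_i(\mf[g]_w)$ is an ideal of $\mf[g]_{s_iw}$ (by closedness of $\Phi_{s_iw}$), so that $U(\mf[g]_{s_iw})\cdot y=\sum_k(\ad_{e_i})^k\,U(s_i(\mf[g]_w))\cdot y$ is a finite sum of finite-dimensional spaces.
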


Next, we introduce a grading on $\mf[g]$ and restrict our attention to those locally nilpotent elements lying in a single graded component; this mimics the finite-dimensional semisimple setting, where any nilpotent element is in a single induced graded piece for some grading (see, for example, \cite{Prem} or \cite{Pom}). To this end, let $G^{min}$ be the minimal Kac--Moody group associated to $\mf[g]$, and let $T \subset G^{min}$ be the maximal torus (we refer to \cite{Ku2} for the full construction and properties of $G^{min}$). Fix $\tau \subset T$ an integral one-parameter subgroup such that $\dot{\tau}$ is dominant; that is, $\alpha_i(\dot{\tau}) \geq 0$ for all simple roots $\alpha_i$. Further, we restrict to $\tau$ such that each eigenspace $\mf[g]_n$ under the action of $\dot{\tau}$ is finite-dimensional. Then we have a graded decomposition into finite-dimensional subspaces 
$$
\mf[g]= \bigoplus_{n \in \Z} \mf[g]_n.
$$
With this in mind, we are now interested in locally nilpotent elements $e \in \mf[g]_w$ such that $e \in \mf[g]_d$ for some $d \geq 1$. Then the version of the Jacobson--Morozov theorem which we would like to consider is as follows: 

\begin{question} \label{Q1} Let $\mf[g]$ be a symmetrizable Kac--Moody algebra, $w \in W$ a Weyl group element, and $\tau \subset T$ a dominant one-parameter subgroup as above. Fix $d \geq 1$, and consider an element 
$$
e \in \mf[g]_w \cap \mf[g]_d.
$$
Can $e$ be extended into an $\mf[sl]_2$-triple?
\end{question}

When $\mf[g]$ is an affine Kac--Moody Lie algebra, Ressayre \cite{Res} gave an affirmative answer to Question \ref{Q1}. However, his proof utilized the loop algebra construction for affine Lie algebras, which has no analogue in the general setting. In the following two sections, we first give an affirmative answer to Question \ref{Q1} completely for the rank two hyperbolic Kac--Moody algebras. Then, we give partial results for general symmetrizable $\mf[g]$ by imposing a stronger restriction on the one-parameter subgroup $\tau$. We also introduce interesting new machinery from the study of regular subalgebras that is applicable for our construction of relevant $\mf[sl]_2$-triples. Throughout, by writing $e \in \mf[g]_w \cap \mf[g]_d$ we work with suitably generic $w \in W$, one-parameter subgroups $\tau$, and positive integers $d \geq 1$ as in Question \ref{Q1}, unless otherwise noted.

\section{Rank two hyperbolic Kac--Moody algebras} \label{S3}

In this section, we consider the case where $\mf[g]$ is a rank two hyperbolic Kac--Moody algebra. These are given by generalized Cartan matrices of the form
$$
A(a,b):= \begin{pmatrix} 2 & -b \\ -a & 2 \end{pmatrix}
$$
where $a, b \in \Z_{\geq 1}$ such that $ab \geq 5$. We will denote by $\mf[g]=\mathcal{H}(a,b)$ the associated algebra. In the case when $a=b$, we denote this algebra by $\mathcal{H}(a)$. As the algebras $\mathcal{H}(a,b)$ are the simplest symmetrizable Kac--Moody algebras beyond the semisimple and affine setting, a lot is known about their root systems and internal structure (see \cite{CKMS} and the references therein). We will first consider the case of $\mathcal{H}(a)$, which is more straightforward, to highlight the key idea of the general approach. 

\subsection{$\mathcal{H}(a)$ and $\mf[sl]_2$-triples}

First, recall that the Weyl group of $\mathcal{H}(a)$ (and in fact for any $\mathcal{H}(a,b))$ is the infinite dihedral group $W \cong \langle s_1, s_2 | s_1^2=s_2^2=1 \rangle$. From this, we can derive the following lemma, a textbook exercise (\cite{Kac}, Exercise 5.25).

\begin{lemma} \label{symroots} The set of positive real roots for $\mathcal{H}(a)$ are given by $\Phi^+_{Re} = \{ b_n \alpha_1+ b_{n-1} \alpha_2, \ b_{n-1} \alpha_1 + b_{n} \alpha_2 \}$ where the sequence $\{b_n\}$ is defined recursively via $b_0=0$, $b_1=1$, $b_n=ab_{n-1}-b_{n-2}$.
\end{lemma}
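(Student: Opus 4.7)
The plan is to produce $\Phi^+_{Re}$ directly as the positive half of the $W$-orbit of the simple roots, using the concrete dihedral structure of $W$. Because $W \cong \langle s_1, s_2 \mid s_1^2 = s_2^2 = 1 \rangle$ is infinite dihedral, every element admits a unique reduced expression as an alternating word in $s_1$ and $s_2$, and every real root is $w(\alpha_i)$ for some such $w$ and some $i \in \{1,2\}$.

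I would first record the action on an arbitrary element of the root lattice. For $\beta = x\alpha_1 + y\alpha_2$, applying the formula $s_i(\alpha_j) = \alpha_j - A_{ij}\alpha_i$ to the matrix $A = A(a,a)$ gives
\[
s_1(\beta) = (ay - x)\alpha_1 + y\alpha_2, \qquad s_2(\beta) = x\alpha_1 + (ax - y)\alpha_2.
\]

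Next, I would generate the orbit by iterated alternating reflections starting from each simple root. Beginning with $\alpha_1 = b_1\alpha_1 + b_0\alpha_2$ and applying $s_2, s_1, s_2, \ldots$ in succession, I would prove by induction on the number of reflections that the resulting root equals $b_{n-1}\alpha_1 + b_n\alpha_2$ or $b_n\alpha_1 + b_{n-1}\alpha_2$ depending on parity, with the inductive step reducing directly to the recurrence $b_n = a b_{n-1} - b_{n-2}$. The mirror iteration beginning from $\alpha_2 = b_0\alpha_1 + b_1\alpha_2$ and alternating $s_1, s_2, \ldots$ produces the same two families. Since $a^2 \geq 5$ forces $a \geq 3$, the recurrence yields $0 = b_0 < b_1 < b_2 < \cdots$, so every iterate lies in $\Phi^+$.

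To conclude that this list exhausts $\Phi^+_{Re}$, I would invoke the standard description of inversion sets: for any reduced expression $w = s_{j_1} \cdots s_{j_k}$, the roots $s_{j_1}\cdots s_{j_{\ell-1}}(\alpha_{j_\ell})$ with $1 \leq \ell \leq k$ enumerate $\Phi_w$, and as $w$ ranges over $W$ these exhaust $\Phi^+_{Re}$. In the dihedral case there are only two families of reduced expressions, the alternating words starting with $s_1$ and those starting with $s_2$, and passing to the limit in length reproduces precisely the two sequences constructed above. The main step requiring care is keeping the indexing of $b_n$ straight across the parity cases and confirming that the two alternating orbits cover everything; both points are essentially bookkeeping, and the entire proof is powered by the two-term recursion embedded in the dihedral action.
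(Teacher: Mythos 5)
The paper offers no proof of this lemma at all; it is quoted as a textbook exercise (Kac, Exercise 5.25), so there is nothing to compare against and your argument serves as a written-out solution of that exercise. The solution is correct. The reflection formulas $s_1(x\alpha_1+y\alpha_2)=(ay-x)\alpha_1+y\alpha_2$ and $s_2(x\alpha_1+y\alpha_2)=x\alpha_1+(ax-y)\alpha_2$ are right for $A(a,a)$, the induction genuinely reduces to $b_n=ab_{n-1}-b_{n-2}$, and the exhaustion step is sound: real roots are by definition the $W$-orbit of the simple roots, every positive real root lies in some inversion set, and in the infinite dihedral group the inversion sets are nested along the two chains of alternating reduced words, so those two chains account for all of $\Phi^+_{Re}$. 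Two small remarks on bookkeeping. First, your left-iteration $s_2,s_1,s_2,\dots$ applied to $\alpha_1$ produces $b_1\alpha_1+b_2\alpha_2$, then $b_3\alpha_1+b_2\alpha_2$, then $b_3\alpha_1+b_4\alpha_2,\dots$, i.e.\ it alternates between the two families rather than tracking one of them; if instead you enumerate the inversion set of the chain $s_1, s_1s_2, s_1s_2s_1,\dots$ via the roots $s_{j_1}\cdots s_{j_{\ell-1}}(\alpha_{j_\ell})$, you get exactly the single family $\{b_n\alpha_1+b_{n-1}\alpha_2\}_{n\ge 1}$, and the other chain gives the mirror family, which makes the parity bookkeeping disappear. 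Second, positivity of the iterates only requires $b_n\ge 0$; the strict monotonicity coming from $a\ge 3$ is not needed for this lemma (it is what the paper's Lemma \ref{symmonotone} establishes separately for use in Proposition \ref{symprop}).
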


These real roots correspond to the integral points on the two branches of the hyperbola $x^2-axy+y^2=1$. The sequence $\{b_n\}$ (and similar sequences for real root coefficients in the nonsymmetric case $\mathcal{H}(a,b)$) is a ``generalized Fibonacci sequence," as first explored by Feingold \cite{Fei}. In particular, the following is a consequence of the recursive definition for $\{b_n\}$ along with the fact that $a \geq 3$.

\begin{lemma} \label{symmonotone} The sequence $\{b_n\}$ is strictly increasing: $0=b_0 < b_1 < b_2 < \cdots$ 
\end{lemma}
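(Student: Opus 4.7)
The plan is to prove the strict monotonicity by a straightforward induction on $n$, using the lower bound $a \geq 3$ (which holds because $ab \geq 5$ and $a = b$ forces $a \geq 3$). To make the induction go through cleanly I would actually prove two statements simultaneously: $b_n > b_{n-1}$ and $b_{n-1} \geq 0$, for all $n \geq 1$.

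First I would verify the base cases directly from the definition: $b_0 = 0 < 1 = b_1$, and $b_2 = ab_1 - b_0 = a \geq 3 > 1 = b_1$, so the monotonicity and nonnegativity hold up through $n = 2$.

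For the inductive step, assume $b_{n-1} > b_{n-2} \geq 0$ for some $n \geq 2$. From the recursion $b_{n+1} = ab_n - b_{n-1}$, I would compute
$$
b_{n+1} - b_n = (a-1) b_n - b_{n-1} \geq 2 b_n - b_{n-1} = b_n + (b_n - b_{n-1}) > b_n > 0,
$$
where the first inequality uses $a \geq 3$, the second equality just regroups, and the final strict inequality uses the inductive hypothesis $b_n > b_{n-1}$ together with $b_n > 0$ (which follows from $b_n > b_{n-1} \geq 0$). This simultaneously establishes $b_{n+1} > b_n$ and $b_n \geq 0$, completing the induction.

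There is no real obstacle here; the only subtlety is making sure the induction carries both the monotonicity and the nonnegativity of the previous term, so that the estimate $(a-1) b_n - b_{n-1} \geq 2b_n - b_{n-1}$ is valid (it would fail if $b_n$ were negative). The hypothesis $a \geq 3$, coming from the hyperbolic condition $ab \geq 5$ in the symmetric case, is essential: for $a = 2$ the sequence would be $0, 1, 2, 3, 4, \dots$, still increasing but only barely, and for $a = 1$ it would be periodic, matching the affine and finite-type behavior one expects to exclude.
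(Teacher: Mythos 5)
Your proof is correct and follows essentially the same route as the paper's: a single induction on $n$ showing $b_{n+1}-b_n>0$ via the recursion and the bound $a\geq 3$, with the paper regrouping as $(a-2)b_n+(b_n-b_{n-1})$ where you write $(a-1)b_n-b_{n-1}\geq 2b_n-b_{n-1}$. Your explicit tracking of the nonnegativity of $b_n$ in the induction is a minor bookkeeping refinement of a step the paper uses implicitly.
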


\begin{proof} We induct on $n$ to show that $b_n-b_{n-1} >0$ for all $n \geq 1$. For the base case $n=1$ this is clear, as $b_1-b_0=1-0=1>0$. Assuming $b_n-b_{n-1} >0$, we have via the recursive definition that 
$$
b_{n+1}-b_n=ab_n-b_{n-1}-b_n=(a-2)b_n+(b_n-b_{n-1}) >0
$$
as $a-2 \geq 1$, $b_n \geq 0$, and the inductive hypothesis gives $b_n-b_{n-1} >0$. 
\end{proof}

We can now prove the following proposition, from which the Jacobson--Morozov theorem will follow immediately. 

\begin{proposition} \label{symprop} The space $\mathcal{H}(a)_w \cap \mathcal{H}(a)_d$, if nonempty, corresponds to a single real root space. 
\end{proposition}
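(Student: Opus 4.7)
The plan is to reduce the proposition to a strict monotonicity statement along a single branch of the real root hyperbola. Since $\Phi_w$ consists only of positive real roots (the positive imaginary roots are stabilized setwise by $W$ and hence are never inverted by $w^{-1}$) and each real root space is one-dimensional, the claim amounts to showing that at most one $\beta \in \Phi_w$ satisfies $\beta(\dot\tau) = d$.

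First, I would compute $\Phi_w$ explicitly. The Weyl group of $\mathcal{H}(a)$ is the infinite dihedral group, so every reduced expression for $w$ is alternating and determined by its length and its starting simple reflection. A direct induction on length, applying $s_{i_1}\cdots s_{i_{j-1}}$ to $\alpha_{i_j}$ and matching against Lemma \ref{symroots}, shows that if $w = s_1 s_2 s_1 \cdots$ has length $k$, then
\[
\Phi_w = \{\gamma_n^+ := b_n \alpha_1 + b_{n-1}\alpha_2 : 1 \leq n \leq k\},
\]
an initial segment along one branch of the hyperbola $x^2 - axy + y^2 = 1$; the analogous reduced expression starting with $s_2$ yields $\Phi_w = \{\gamma_n^- := b_{n-1}\alpha_1 + b_n\alpha_2 : 1 \leq n \leq k\}$ along the other branch. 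In particular, $\Phi_w$ always lies entirely on a single branch.

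Next, set $m_i := \alpha_i(\dot\tau)$. Dominance of $\dot\tau$ gives $m_i \geq 0$, and the finite-dimensionality hypothesis on the graded pieces forces $(m_1, m_2) \neq (0,0)$. By Lemma \ref{symmonotone}, for any $n > m \geq 1$ both $b_n > b_m$ and $b_{n-1} > b_{m-1}$, so
\[
\gamma_n^+(\dot\tau) - \gamma_m^+(\dot\tau) = (b_n - b_m)\,m_1 + (b_{n-1} - b_{m-1})\,m_2 > 0,
\]
and similarly for the $\gamma^-$ branch. Hence $\beta \mapsto \beta(\dot\tau)$ is injective on each branch, so at most one element of $\Phi_w$ can have degree $d$, which proves the proposition.

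The main (mild) technical point is the combinatorial identification of $\Phi_w$ as an initial segment of a single branch; after that, the monotonicity argument is an immediate consequence of Lemma \ref{symmonotone} together with the fact that not both $m_i$ vanish. This separation of the two branches is special to rank two and will need to be replaced by a different device in the general symmetrizable setting treated in Section \ref{S4}.
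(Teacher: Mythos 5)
Your proof is correct and follows essentially the same route as the paper: identify $\Phi_w$ explicitly as an initial segment of one branch $\{b_n\alpha_1+b_{n-1}\alpha_2\}$ (or its mirror), then use the strict monotonicity of $\{b_n\}$ from Lemma \ref{symmonotone} together with dominance of $\dot\tau$ to see that $\beta\mapsto\beta(\dot\tau)$ is injective on $\Phi_w$, so at most one root can have degree $d$. The only cosmetic difference is that you exclude $\alpha_1(\dot\tau)=\alpha_2(\dot\tau)=0$ via the finite-dimensionality of the graded pieces, while the paper gets the same contradiction directly from $d\geq 1$.
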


\begin{proof}
As $W$ is the infinite dihedral group and the Cartan matrix is symmetric, without loss of generality assume $l(w)=n$ and $w=s_1 s_2 s_1 \cdots$. Then we have 
$$
\Phi_w = \{ b_k \alpha_1 + b_{k-1} \alpha_2 \}_{k=1}^n.
$$

Suppose that $\alpha, \beta \in \Phi_w$ such that $\alpha(\dot{\tau})=\beta(\dot{\tau})=d \geq 1$. Write $\alpha = b_i \alpha_1 + b_{i-1} \alpha_2$, $\beta= b_j \alpha_1 + b_{j-1} \alpha_2$, and suppose that $i >j$. Then we have 
$$
0 = \alpha(\dot{\tau})-\beta(\dot{\tau}) = (b_i - b_j )\alpha_1(\dot{\tau}) + (b_{i-1} - b_{j-1}) \alpha_2(\dot{\tau}).
$$

But as $\tau$ is dominant, and as the sequence $\{b_k\}$ is strictly increasing, we have necessarily that $\alpha_1(\dot{\tau})=\alpha_2(\dot{\tau}) =0$, contradicting $d \geq 1$. Thus, there cannot be two distinct roots in $\Phi_w$ evaluating to $d$ on $\dot{\tau}$. 
\end{proof}

Therefore, just by root space considerations, if $0 \neq e \in \mathcal{H}(a)_w \cap \mathcal{H}(a)_d$, necessarily $e=e_\beta$ for some real root $\beta$, so we have the following corollary. 

\begin{corollary} \label{symcor} For $\mf[g]=\mathcal{H}(a)$, if $e \in \mf[g]_w \cap \mf[g]_d$, then $e$ can be extended into an $\mf[sl]_2$-triple. 
\end{corollary}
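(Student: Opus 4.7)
The plan is to derive the corollary almost immediately from Proposition \ref{symprop}, which has already done the substantive work. By the proposition, if $\mathcal{H}(a)_w \cap \mathcal{H}(a)_d$ is nonzero then it coincides with the one-dimensional real root space $\mathcal{H}(a)_\beta$ for some $\beta \in \Phi_w$; note that $\beta \in \Phi^+_{Re}$, since the Weyl group permutes positive imaginary roots among themselves and hence $\Phi_w \subset \Phi^+_{Re}$ for any $w \in W$. Fixing a nonzero $e_\beta \in \mathcal{H}(a)_\beta$, the element of interest must therefore have the form $e = c\, e_\beta$ for some $c \in \C^\times$.

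The remaining step is entirely standard: for any real root $\beta$ of a symmetrizable Kac--Moody algebra one can choose $f_\beta \in \mathcal{H}(a)_{-\beta}$ with $h_\beta := [e_\beta, f_\beta]$ equal to the coroot $\beta^\vee \in \mf[h]$, and the $\mf[sl]_2$-relations $[h_\beta, e_\beta] = 2 e_\beta$ and $[h_\beta, f_\beta] = -2 f_\beta$ then follow from $\beta(\beta^\vee) = 2$. To promote the triple $(e_\beta, h_\beta, f_\beta)$ to one whose nil-positive element is precisely $e = c\, e_\beta$, the natural move is to rescale: the triple $(c\, e_\beta,\ h_\beta,\ c^{-1} f_\beta)$ satisfies all three bracket relations verbatim, and the induced Lie algebra homomorphism $\theta_e \colon \mf[sl]_2(\C) \to \mathcal{H}(a)$ sends the standard nil-positive generator to $e$, as required.

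So there is no genuine obstacle left to clear in the corollary itself; all of the content specific to the rank two hyperbolic case (strict monotonicity of $\{b_n\}$ together with dominance of $\dot{\tau}$) was absorbed into Proposition \ref{symprop}, and the passage from ``single real root space'' to ``$\mf[sl]_2$-triple'' is part of the basic structure theory of Kac--Moody algebras. The only point worth flagging for the reader is the implicit use of $\Phi_w \subset \Phi^+_{Re}$, which is what guarantees that the root appearing in Proposition \ref{symprop} is real and hence that $\mathcal{H}(a)_\beta$ is one-dimensional; this is exactly the feature that will fail in a more delicate way once we leave the setting where such clean monotonicity on inversion sets is available.
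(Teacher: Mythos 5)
Your argument is correct and is essentially the paper's own proof: the paper likewise deduces from Proposition \ref{symprop} that any nonzero $e \in \mathcal{H}(a)_w \cap \mathcal{H}(a)_d$ is a real root vector and then invokes the standard $\mf[sl]_2$-triple attached to a real root. Your additional details (the observation that $\Phi_w \subset \Phi^+_{Re}$ and the rescaling $(c\,e_\beta,\ h_\beta,\ c^{-1}f_\beta)$) are accurate and merely make explicit what the paper leaves implicit.
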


\begin{remark}
In a recent work, Tsurusaki \cite{Tsu} also constructs $\mf[sl]_2$-triples for vectors in the linear span of the real root spaces for symmetric rank two Kac--Moody algebras. Interestingly, due to differences in the initial set up of the approaches therein and in this present work, there is no overlap in the elements of $\mathcal{H}(a)$ extended into $\mf[sl]_2$-triples. 
\end{remark}

\subsection{The general case $\mathcal{H}(a,b)$}

We next consider the case for $\mathcal{H}(a,b)$, where $a \neq b$. Up to symmetry, suppose that $a >b$. This case is very similar to the  symmetric case above, but is necessarily more delicate. Again by \cite{Kac}*{Exercise 5.25}, we can compute the coefficients of the real roots of $\mathcal{H}(a,b)$ as recursive sequences. We refer instead to the paper of Carbone et. al. \cite{CKMS} (and use notation as given therein) which goes into great detail about these coefficients. In particular, we partition the positive real roots of $\mathcal{H}(a,b)$ into the following:
$$
\begin{aligned}
&\alpha_j^{LL}:=(s_1s_2)^j \alpha_1, \hspace{5em}  \alpha_j^{LU}:=(s_2s_1)^js_2 \alpha_1 \\
&\alpha_j^{SU}:=(s_2s_1)^j \alpha_2, \hspace{5em} \alpha_j^{SL}:=(s_1s_2)^j s_1 \alpha_2.
\end{aligned}
$$ 

We then have, using this notation, the following lemma taken from \cite{CKMS}*{Lemma 2.1} which gives recursive definitions of the root coefficients, generalizing Lemma \ref{symroots}.

\begin{lemma} \label{CKMScoeff} For all nonnegative integers $j$, 

$$
\begin{aligned}
&\alpha_j^{LL}=\eta_j \alpha_1 + a \gamma_j \alpha_2, \hspace{5em}  \alpha_j^{LU}= \eta_j \alpha_1 + a \gamma_{j+1} \alpha_2, \\
&\alpha_j^{SU}= b \gamma_j \alpha_1 + \eta_j \alpha_2, \hspace{5em} \alpha_j^{SL}= b \gamma_{j+1} \alpha_1 +\eta_j \alpha_2,
\end{aligned}
$$ 
where 
\begin{itemize}
\item[(a)] $\gamma_0=0$, $\gamma_1=1$, $\eta_0=1$, $\eta_1=ab-1$;
\item[(b)] $\eta_j= ab\gamma_j - \eta_{j-1}$;
\item[(c)] $\gamma_j = \eta_{j-1}-\gamma_{j-1}$;
\item[(d)] Both sequences $X_j=\gamma_j$ and $X_j=\eta_j$ satisfy the recurrence relation 
$$
X_j = (ab-2)X_{j-1}-X_{j-1}.
$$
\end{itemize}
\end{lemma}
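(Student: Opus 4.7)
The plan is to prove all four coefficient formulas simultaneously by induction on $j$, building up the sequences $\{\gamma_j\}$ and $\{\eta_j\}$ along the way, and then derive the recursion (d) as a purely algebraic consequence of (b) and (c). The inputs are the simple reflection formulas forced by $A(a,b)$, namely $s_1(\alpha_1)=-\alpha_1$, $s_1(\alpha_2)=b\alpha_1+\alpha_2$, $s_2(\alpha_1)=\alpha_1+a\alpha_2$, $s_2(\alpha_2)=-\alpha_2$. The key combinatorial observation is that the four families interlock into two alternating chains: using the braid-free rewritings $(s_2s_1)^js_2=s_2(s_1s_2)^j$ and $(s_1s_2)^js_1=s_1(s_2s_1)^j$, one gets
\[
\alpha_j^{LL}\xrightarrow{s_2}\alpha_j^{LU}\xrightarrow{s_1}\alpha_{j+1}^{LL}\xrightarrow{s_2}\cdots,\qquad \alpha_j^{SU}\xrightarrow{s_1}\alpha_j^{SL}\xrightarrow{s_2}\alpha_{j+1}^{SU}\xrightarrow{s_1}\cdots,
\]
so each root in a family is obtained from the previous by a single simple reflection.

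For the base case $j=0$, one has $\alpha_0^{LL}=\alpha_1$ and $\alpha_0^{SU}=\alpha_2$, and applying $s_2$ and $s_1$ respectively gives $\alpha_0^{LU}=\alpha_1+a\alpha_2$ and $\alpha_0^{SL}=b\alpha_1+\alpha_2$; these match the claimed formulas with the initial data $\gamma_0=0$, $\gamma_1=1$, $\eta_0=1$ from (a). For the inductive step on the L-chain, assume $\alpha_j^{LL}=\eta_j\alpha_1+a\gamma_j\alpha_2$ and compute
\[
\alpha_j^{LU}=s_2\alpha_j^{LL}=\eta_j(\alpha_1+a\alpha_2)-a\gamma_j\alpha_2=\eta_j\alpha_1+a(\eta_j-\gamma_j)\alpha_2,
\]
at which point relation (c) is imposed as the \emph{definition} $\gamma_{j+1}:=\eta_j-\gamma_j$ to recover the claimed form for $\alpha_j^{LU}$. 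Applying $s_1$ next gives
\[
\alpha_{j+1}^{LL}=s_1\alpha_j^{LU}=-\eta_j\alpha_1+a\gamma_{j+1}(b\alpha_1+\alpha_2)=(ab\gamma_{j+1}-\eta_j)\alpha_1+a\gamma_{j+1}\alpha_2,
\]
and (b) appears as the definition $\eta_{j+1}:=ab\gamma_{j+1}-\eta_j$. The S-chain is handled by the parallel computation with the roles of $\alpha_1,\alpha_2$ and of $a,b$ interchanged; verification of $\eta_1=ab-1$ from (a) falls out of this base step for the S-chain.

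To deduce (d), I would eliminate $\eta_{j-1}$ between (b) and (c): rearranging (c) gives $\eta_{j-1}=\gamma_j+\gamma_{j-1}$, and substituting into $\eta_j=ab\gamma_j-\eta_{j-1}$ and combining with (c) once more yields the second-order linear recurrence (with the self-evident correction $X_j=(ab-2)X_{j-1}-X_{j-2}$ for the typo in the stated version) satisfied by both sequences.

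The only real pitfall here is bookkeeping: one must check at the outset that the LL-to-LU transition uses $s_2$ first (and SU-to-SL uses $s_1$ first), because otherwise the asymmetric coefficients $a$ and $b$ end up attached to the wrong simple root. Once that is settled, the argument is purely mechanical and does \emph{not} invoke the hyperbolic hypothesis $ab\geq 5$; the lemma is really a statement about the Weyl group action on the rank-two root system for arbitrary $a,b\in\Z_{\geq 1}$.
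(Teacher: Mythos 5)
Your argument is correct. Note first that the paper itself offers no proof of this lemma: it is quoted verbatim from \cite{CKMS}*{Lemma 2.1}, so there is nothing internal to compare against, and a self-contained derivation like yours is genuinely additional content. Your induction checks out in every detail: the rewritings $(s_2s_1)^js_2=s_2(s_1s_2)^j$ and $(s_1s_2)^js_1=s_1(s_2s_1)^j$ do make each family step a single simple reflection applied to the previous root; the reflection formulas $s_1(\alpha_2)=b\alpha_1+\alpha_2$ and $s_2(\alpha_1)=\alpha_1+a\alpha_2$ agree with the paper's convention $s_i(\alpha_j)=\alpha_j-A_{ij}\alpha_i$ for $A(a,b)$; the base cases reproduce $\gamma_0=0$, $\gamma_1=1$, $\eta_0=1$; and the two transitions force exactly $\gamma_{j+1}=\eta_j-\gamma_j$ and $\eta_{j+1}=ab\gamma_{j+1}-\eta_j$, i.e.\ relations (c) and (b), with $\eta_1=ab-1$ falling out as you say. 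Reassuringly, the S-chain computation yields the same two recursions, which is the consistency check that makes the simultaneous induction close. Your elimination of $\eta$ (resp.\ $\gamma$) to get the second-order recurrence is right, and you correctly flag that the displayed recurrence in (d) contains a typo and should read $X_j=(ab-2)X_{j-1}-X_{j-2}$; this is the form actually used implicitly in the remark following the lemma (where the monotonicity argument of Lemma \ref{symmonotone} is invoked, and that argument needs $ab-2\geq 2$, which holds since $ab\geq 5$). Your closing observation that the lemma itself is a pure Weyl-group computation independent of the hyperbolic hypothesis is also accurate, with the caveat that one wants $W$ infinite for the four families to consist of infinitely many distinct real roots.
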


\begin{remark} Note that, by the same proof as Lemma \ref{symmonotone}, Lemma \ref{CKMScoeff}(d) ensures that the sequences $\{\gamma_j\}$ and $\{\eta_j\}$ are strictly increasing. 
\end{remark}

Now, we consider the sets $\Phi_w$ for $w \in W$. Again, as $W$ is the infinite dihedral group, reduced words for $w$ are given initially by either $w_1=s_1s_2s_1 \cdots $ or $w_2=s_2s_1s_2 \cdots$. Then we get for these initial sequences that 
$$
\begin{aligned}
\Phi_{w_1} &\subset \{\alpha_j^{LL}, \alpha_j^{SL} \}_{j \geq 0}, \\
\Phi_{w_2} &\subset \{\alpha_j^{SU}, \alpha_j^{LU} \}_{j \geq 0}.
\end{aligned}
$$
To replicate the proof Proposition \ref{symprop}, we need to consider when two roots $\alpha, \beta \in \Phi_w$ can satisfy $\alpha(\dot{\tau})=\beta(\dot{\tau})=d$ for some $d \geq 1$. The key fact in the proof was the strict monotonicity of the coefficients of the roots in $\Phi_w$. In the nonsymmetric case, we make use of the following similar inequalities of \cite{CKMS}*{Lemmas 3.1 and 3.5}. 

\begin{lemma} \label{inequalities}

\begin{enumerate}
\item[(a)]  If $a >b>1$, then 
$$
\begin{aligned} 
0&=b\gamma_0 < \eta_0 < b \gamma_1 < \eta_1 < b \gamma_2 < \cdots ,\\
0&= a \gamma_0 < \eta_0 < a \gamma_1 < \eta_1 < a \gamma_2 < \cdots .
\end{aligned}
$$

\item[(b)] If $a > b=1$, then 
$$
\begin{aligned}
0&= \gamma_0 < \eta_0 = \gamma_1 < \gamma_2 < \eta_1 < \gamma_3 < \eta_2 \cdots , \\
0&= a \gamma_0 < \eta_0 < \eta_1 < a \gamma_1 < \eta_2 < a \gamma_2 < \eta_3 < a \gamma_3 < \cdots .
\end{aligned}
$$
\end{enumerate}

\end{lemma}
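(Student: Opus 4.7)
The plan is to prove both parts of Lemma \ref{inequalities} by induction on $j$, feeding off the recurrences collected in Lemma \ref{CKMScoeff}. The two most useful relations in practice are $\gamma_{j+1} = \eta_j - \gamma_j$ (equivalently, $\eta_j = \gamma_{j+1}+\gamma_j$) from item (c) and $\eta_{j+1} = ab\gamma_{j+1} - \eta_j$ from item (b), together with the uniform second-order recurrence $X_{j+1} = (ab-2)X_j - X_{j-1}$ from item (d) governing both sequences.

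For part (a), the base case reduces to the numerical inequalities $0 < 1 < b < ab-1$ and $0 < 1 < a < ab-1$, each of which follows from $a > b > 1$ and $ab \geq 5$. For the inductive step, assuming inductively that $b\gamma_{j-1} < \eta_{j-1} < b\gamma_j < \eta_j$, I would rewrite
\[
b\gamma_{j+1} - \eta_j = b(\eta_j - \gamma_j) - \eta_j = (b-1)\eta_j - b\gamma_j,
\]
which is positive because $\eta_j > b\gamma_j$ and $b \geq 2$. The remaining inequality $b\gamma_{j+1} < \eta_{j+1}$ is obtained from $\eta_{j+1} - b\gamma_{j+1} = (ab-b)\gamma_{j+1} - \eta_j$, where the recurrence and the fact that $a > b$ make the right-hand side positive. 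The parallel chain for $a\gamma_j$ follows by the symmetric argument, swapping the roles of $a$ and $b$.

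For part (b), the case $b=1$ forces the initial equality $\eta_0 = \gamma_1 = 1$, and the interlacing pattern of the first chain shifts accordingly to $\gamma_{j+1} < \eta_j < \gamma_{j+2}$ for $j \geq 1$. The key computations are $\eta_j - \gamma_{j+1} = \gamma_j > 0$ and, using the second-order recurrence with $b=1$,
\[
\gamma_{j+2} - \eta_j = (a-2)\gamma_{j+1} - \gamma_j - \gamma_{j+1} - \gamma_j = (a-3)\gamma_{j+1} - 2\gamma_j,
\]
which is positive for $j \geq 1$ by induction using $a \geq 5$ and the monotonicity of $\{\gamma_j\}$ noted in the remark after Lemma \ref{CKMScoeff}. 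The second chain is handled analogously, alternating between $\eta_j < a\gamma_j$ (which is $\eta_j < ab\gamma_j$ specialized to $b=1$, hence a direct consequence of $\eta_j = ab\gamma_j - \eta_{j-1}$ and $\eta_{j-1} > 0$) and $a\gamma_j < \eta_{j+1}$ (using the same identity shifted one step).

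The main obstacle is the delicate bookkeeping in part (b): because the pattern interleaves $\gamma$'s and $\eta$'s with a one-step offset rather than a simple comparison at fixed index, one has to track two parallel inductive statements (for $\gamma_{j+1}$ vs.\ $\eta_j$ and $\eta_j$ vs.\ $\gamma_{j+2}$) and verify that they feed into each other correctly at every stage. Once this combinatorial care is taken, the algebra is driven entirely by the recurrences and the numerical constraint $a \geq 5$ when $b=1$ (and $ab \geq 5$ otherwise).
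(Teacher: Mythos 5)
The paper does not prove this lemma at all: it is imported verbatim from Carbone--Kownacki--Murray--Srinivasan \cite{CKMS}*{Lemmas 3.1 and 3.5}, so your induction is not competing with an in-text argument but with an external citation. That said, your strategy (induction driven by the recurrences of Lemma \ref{CKMScoeff}, using the corrected form $X_{j+1}=(ab-2)X_j-X_{j-1}$ of item (d)) is the natural one, and part (a) closes cleanly: $b\gamma_{j+1}-\eta_j=(b-1)\eta_j-b\gamma_j>0$ and $\eta_{j+1}-b\gamma_{j+1}=b(a-1)\gamma_{j+1}-\eta_j>0$ do feed off the inductive hypothesis, using $a>b\geq 2$, and the $a$-chain is symmetric. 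The first chain of part (b) is also fine: $\eta_j-\gamma_{j+1}=\gamma_j$ and $\gamma_{j+2}-\eta_j=(a-3)\gamma_{j+1}-2\gamma_j$ are correct and positive for $j\geq 1$ once $a\geq 5$ and the strict monotonicity of $\{\gamma_j\}$ are invoked.

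The one genuine thin spot is the inequality $a\gamma_j<\eta_{j+1}$ in the second chain of part (b). ``The same identity shifted one step'' gives
$$
\eta_{j+1}-a\gamma_j \;=\; a\gamma_{j+1}-\eta_j-a\gamma_j \;=\; a(\gamma_{j+1}-\gamma_j)-\eta_j,
$$
and, unlike the companion inequality $a\gamma_j-\eta_j=\eta_{j-1}>0$, this is not positive merely from positivity or monotonicity of the sequences: you need a quantitative lower bound on the growth of $\{\gamma_j\}$. Substituting $\eta_j=\gamma_{j+1}+\gamma_j$ rewrites the right-hand side as $(a-1)\gamma_{j+1}-(a+1)\gamma_j$, which is positive once you know, say, $\gamma_{j+1}\geq 2\gamma_j$ for $j\geq 1$; that in turn follows from $\gamma_{j+1}=(a-2)\gamma_j-\gamma_{j-1}>(a-3)\gamma_j\geq 2\gamma_j$ for $a\geq 5$, together with $2(a-1)\geq a+1$. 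Add that one estimate (and the finitely many base-case checks you already indicate) and the argument is complete.
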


With these inequalities, we can now arrive at the following proposition.

\begin{proposition} \label{permissableroots} 
\begin{enumerate}
\item[(a)] If $a > b > 1$, the space $\mathcal{H}(a,b)_w \cap \mathcal{H}(a,b)_d$ is either empty or corresponds to a single real root space. 

\item[(b)] Else, if $a>b=1$, the space $\mathcal{H}(a,1)_w \cap \mathcal{H}(a,1)_d$ is either empty or corresponds to a single real root space \textbf{except in the cases}

\begin{enumerate}

\item[(i)] $w=s_1 s_2  \dots$, \ $l(w) \geq 2$, \ $\alpha= \alpha_1+\alpha_2$, \ $\beta= \alpha_1$, and $\alpha_1 (\dot{\tau})=d$, $\alpha_2(\dot{\tau})=0$, 

\item[(ii)] $w=s_2s_1s_2 \dots$, \ $l(w) \geq 3$, \ $\alpha=\alpha_1+ a \alpha_2$, \ $\beta= \alpha_1+ (a-1) \alpha_2$, and $\alpha_1(\dot{\tau})=d$, $\alpha_2(\dot{\tau})=0$. \end{enumerate}

\end{enumerate}
\end{proposition}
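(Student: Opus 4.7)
The plan is to mimic the proof of Proposition \ref{symprop}, replacing its single strictly increasing sequence $\{b_n\}$ with the two sequences $\{\eta_j\}, \{\gamma_j\}$ and the interleavings supplied by Lemma \ref{inequalities}. Since $W$ is infinite dihedral, every reduced word for $w$ starts with $s_1$ or $s_2$; by symmetry assume it starts with $s_1$, so $\Phi_w \subset \{\alpha_j^{LL}, \alpha_j^{SL}\}_{j \geq 0}$. If two distinct roots $\alpha, \beta \in \Phi_w$ satisfy $\alpha(\dot{\tau}) = \beta(\dot{\tau}) = d$, writing $\alpha - \beta = c_1 \alpha_1 + c_2 \alpha_2$ with $c_1, c_2 \in \Z$ determined by the types and indices, we must have $c_1 \alpha_1(\dot{\tau}) + c_2 \alpha_2(\dot{\tau}) = 0$. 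Because $\dot{\tau}$ is dominant ($\alpha_i(\dot{\tau}) \geq 0$), whenever $c_1$ and $c_2$ share the same strict sign this forces $\alpha_1(\dot{\tau}) = \alpha_2(\dot{\tau}) = 0$, contradicting $d \geq 1$.

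For part (a), take $a > b > 1$, so every inequality in Lemma \ref{inequalities}(a) is strict. Same-type pairs $(\alpha_i^{LL}, \alpha_j^{LL})$ and $(\alpha_i^{SL}, \alpha_j^{SL})$ are immediate: without loss of generality $i > j$, and strict monotonicity of $\{\eta_j\}, \{\gamma_j\}$ (Lemma \ref{CKMScoeff}(d) via the argument of Lemma \ref{symmonotone}) makes both $c_1, c_2 > 0$. For the mixed pair $(\alpha_i^{LL}, \alpha_j^{SL})$ one has $c_1 = \eta_i - b\gamma_{j+1}$ and $c_2 = a\gamma_i - \eta_j$; splitting on the two regions $j < i$ and $j \geq i$, the strict interleavings $b\gamma_k < \eta_k < b\gamma_{k+1}$ and $a\gamma_k < \eta_k < a\gamma_{k+1}$ of Lemma \ref{inequalities}(a) give $c_1, c_2$ of the same strict sign in each region (both positive for $j < i$, both negative for $j \geq i$). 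Hence no two distinct roots of $\Phi_w$ can share a $\tau$-value, proving (a).

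For part (b), take $a > b = 1$, so the analysis proceeds identically except at the single non-strict inequality $\eta_0 = \gamma_1$ of Lemma \ref{inequalities}(b). Same-type pairs and all mixed pairs whose indices lie in the strict portion of the interleavings still yield same-sign $c_1, c_2$. A direct inspection shows the equality $\eta_0 = \gamma_1$ creates a vanishing coefficient at exactly two small-index configurations. For $w$ starting with $s_1$: the pair $(\alpha_0^{LL}, \alpha_0^{SL}) = (\alpha_1, \alpha_1 + \alpha_2)$ yields $c_1 = \eta_0 - \gamma_1 = 0$ and $c_2 \neq 0$, collapsing the equation to $\alpha_2(\dot{\tau}) = 0$ with $\alpha_1(\dot{\tau}) = d$ now free; both roots lie in $\Phi_w$ precisely when $l(w) \geq 2$, producing exception (i). For $w$ starting with $s_2$: the parallel argument using roots in $\{\alpha_j^{LU}, \alpha_j^{SU}\}$ singles out $(\alpha_0^{LU}, \alpha_1^{SU}) = (\alpha_1 + a\alpha_2, \alpha_1 + (a-1)\alpha_2)$, again with $c_1 = 0$ and $c_2 \neq 0$, forcing $\alpha_2(\dot{\tau}) = 0$; both lie in $\Phi_w$ precisely when $l(w) \geq 3$, giving exception (ii). All remaining index pairs are disposed of by the still-strict portion of Lemma \ref{inequalities}(b).

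The main obstacle is the case analysis in (b): one must verify that the lone equality $\eta_0 = \gamma_1$ produces a vanishing coefficient only at the two configurations listed, both in the $LL$/$SL$ comparison and in the $LU$/$SU$ one. This reduces to a finite check on small indices, because for large indices the unaffected strict inequalities of Lemma \ref{inequalities}(b) keep $c_1, c_2$ strictly same-signed exactly as in part (a).
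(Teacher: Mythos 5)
Your proposal is correct and follows essentially the same route as the paper: same-type pairs are ruled out by strict monotonicity of $\{\eta_j\}$ and $\{\gamma_j\}$, mixed pairs by the sign analysis of the difference coefficients via the interleavings in Lemma \ref{inequalities}, and the two exceptional configurations arise exactly from the lone equality $\eta_0=\gamma_1$ when $b=1$. Your identification of the exceptional pairs $(\alpha_0^{LL},\alpha_0^{SL})$ and $(\alpha_0^{LU},\alpha_1^{SU})$, together with the length conditions $l(w)\geq 2$ and $l(w)\geq 3$, matches the paper's proof.
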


\begin{proof}
\begin{itemize}

\item[(a)] Since the sequences $\{\eta_j\}$ and $\{\gamma_j\}$ are both strictly increasing, as in Corollary \ref{symcor} we have for any label $J \in \{LL, \ SL, \ LU, \ SU\}$ that $(\alpha_j^{J}-\alpha_k^{J})(\dot{\tau})=0$ forces $j=k$, so that there can be at most one of these roots in $\Phi_w$ with $\alpha_j^J(\dot{\tau})=d \geq 1$. Now, suppose that $\alpha_j^{LL}(\dot{\tau})-\alpha_k^{SL}(\dot{\tau})=0$, respectively $\alpha_j^{LU}(\dot{\tau})-\alpha_k^{SU}(\dot{\tau})=0$. Then we have, respectively, that 
$$
(\eta_j -b\gamma_{k+1}) \alpha_1(\dot{\tau}) + (a \gamma_j - \eta_k) \alpha_2(\dot{\tau})=0
$$
or 
$$
(\eta_j - b\gamma_{k}) \alpha_1(\dot{\tau}) + (a \gamma_{j+1} - \eta_k) \alpha_2(\dot{\tau})=0. 
$$
But by Lemma \ref{inequalities}(a), the coefficient pairs $(\eta_j-b\gamma_{k+1}), (a \gamma_j - \eta_k)$ or $(\eta_j-b\gamma_k), (a \gamma_{j+1} - \eta_k)$ are nonzero and have the same sign. Thus, we cannot have $(\alpha_j^{LL}-\alpha_k^{SL})(\dot{\tau})=0$ or $(\alpha_j^{LU}-\alpha_k^{SU})(\dot{\tau})=0$. Then the result follows in this case.

\item[(b)] Again, the sequences $\{\eta_j\}$ and $\{\gamma_j\}$ are strictly increasing, so that for any label $J$ as above we have $(\alpha_j^J-\alpha_k^J)(\dot{\tau})=0$ forces $j=k$. Now, using that $b=1$ we have 
$$
\begin{aligned}
\alpha_j^{LL}-\alpha_k^{SL} &= (\eta_j - \gamma_{k+1})\alpha_1 + (a \gamma_j - \eta_k) \alpha_2, \\
\alpha_j^{LU}-\alpha_k^{SU} &= (\eta_j-\gamma_k) \alpha_1 + (a \gamma_{j+1}-\eta_k) \alpha_2.
\end{aligned}
$$
Then similarly as above, we use Lemma \ref{inequalities}(b) to see that the coefficient pairs are both nonzero and have the same sign \textbf{except} in the cases 
$$
\begin{aligned}
 j=k=0&: \alpha_0^{SL}-\alpha_0^{LL}=\alpha_2, \\
 j=0, k=1&: \alpha_0^{LU}-\alpha_1^{SU} = \alpha_2.
\end{aligned}
$$
Finally, the pairs $\{ \alpha_0^{SL}, \alpha_0^{LL}\}$ and $\{ \alpha_0^{LU}, \alpha_1^{SU}\}$ can only appear in the sets $\Phi_w$ as described and both satisfy $\alpha_j^{J}(\dot{\tau}) = d \geq 1$ for appropriate labels $j$ and $J$ when $\alpha_2(\dot{\tau})=0$.

\end{itemize}
\end{proof}

So, as in Proposition \ref{symprop} and Corollary \ref{symcor}, we can conclude that the majority of elements $e \in \mathcal{H}(a,1)_w \cap \mathcal{H}(a,1)_d $ can be extended into $\mf[sl]_2$-triples, as we can write $e=e_\beta$ for some real positive root $\beta$. While this does \textbf{not} hold uniformly as in the case of $\mathcal{H}(a)$, the next proposition shows that the remaining cases are off only by an element of $\Aut(\mathcal{H}(a,1))$. For the remainder of this section, we set $\mf[g]=\mathcal{H}(a,1)$ for notational convenience. 

\begin{proposition} \label{final rank two cases} Let $e \in \mf[g]_w \cap \mf[g]_d$ as in Proposition \ref{permissableroots} (b)(i),(ii). Then $e$ can be extended into an $\mf[sl]_2$-triple. 
\end{proposition}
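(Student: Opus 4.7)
The plan is to use an inner automorphism of $\mf[g]$ to reduce $e$ to a scalar multiple of a single real root vector, which then extends into an $\mf[sl]_2$-triple via the classical root $\mf[sl]_2$ construction; transporting this triple back by the inverse automorphism extends $e$ itself. Since $e_{\alpha_2}$ and $f_{\alpha_2}$ are locally nilpotent on $\mf[g]$, the exponentials $\phi_t := \Exp(t \cdot \ad_{f_{\alpha_2}})$ and $\psi_t := \Exp(t \cdot \ad_{e_{\alpha_2}})$ are well-defined Lie algebra automorphisms of $\mf[g]$ for every $t \in \C$.

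In case (b)(i), write $e = c_1 e_{\alpha_1+\alpha_2} + c_2 e_{\alpha_1}$ and apply $\phi_t$. The bracket $[f_{\alpha_2}, e_{\alpha_1+\alpha_2}]$ is a nonzero multiple of $e_{\alpha_1}$, since $(\alpha_1+\alpha_2)-\alpha_2 = \alpha_1$ is a root, while $[f_{\alpha_2}, e_{\alpha_1}] = 0$ because $\alpha_1 - \alpha_2$ is not a root (no positive root can have a negative simple-root coefficient). Hence $\phi_t$ fixes $e_{\alpha_1}$ and sends $e_{\alpha_1+\alpha_2}$ to $e_{\alpha_1+\alpha_2} + tc \cdot e_{\alpha_1}$ for some nonzero $c \in \C$. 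Assuming $c_1 \neq 0$ (else $e$ is already a real root vector), the choice $t = -c_2/(c_1 c)$ produces $\phi_t(e) = c_1 e_{\alpha_1+\alpha_2}$, which extends. Case (b)(ii) proceeds analogously with $\psi_t$ applied to $e = c_1 e_{\alpha_1+a\alpha_2} + c_2 e_{\alpha_1+(a-1)\alpha_2}$: the bracket $[e_{\alpha_2}, e_{\alpha_1+(a-1)\alpha_2}]$ is a nonzero multiple of $e_{\alpha_1+a\alpha_2}$, while $[e_{\alpha_2}, e_{\alpha_1+a\alpha_2}] = 0$ because $s_2(\alpha_1) = \alpha_1 + a\alpha_2$ forces the $\alpha_2$-string through $\alpha_1$ to be $\{\alpha_1 + k\alpha_2 : 0 \leq k \leq a\}$, so $\alpha_1 + (a+1)\alpha_2$ is not a root.

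The only real content is verifying the non-root conditions on $\alpha_1 - \alpha_2$ and $\alpha_1 + (a+1)\alpha_2$, both of which follow from the standard analysis of the $\alpha_2$-string through the real root $\alpha_1$; these ensure the exponential series truncate after a single term and the explicit scalar cancellation goes through. A minor sanity check is that $\alpha_2(\dot{\tau}) = 0$ in both exceptional cases, so $\phi_t$ and $\psi_t$ preserve the $\Z$-grading, although this is not strictly required for extending $e$ into an $\mf[sl]_2$-triple.
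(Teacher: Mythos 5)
Your argument is correct, and its engine is the same as the paper's: move $e$ to (or from) a single real root vector by an automorphism $\exp(t\,\ad_x)$ for a locally nilpotent real root vector $x$, then transport the obvious $\mf[sl]_2$-triple for that root vector. For case (b)(ii) the two proofs essentially coincide: the paper exhibits $e=\Ad\bigl(\Exp\bigl(\tfrac{y}{x}e_2\bigr)\bigr)(xe_\beta)$ with $\beta=\alpha_1+(a-1)\alpha_2$, which is exactly the inverse of your $\psi_t$ applied to $e$. For case (b)(i) you take a genuinely different (and slightly leaner) route: the paper first applies the reflection automorphism $s_2(\ad)$, using $\alpha_2(\dot{\tau})=0$ to land in the graded setting of case (ii) and then quotes that case, whereas you act directly with $\exp(t\,\ad_{f_2})$, exploiting $[f_2,e_{\alpha_1}]=0$ so the series truncates and the $e_{\alpha_1}$-component can be cancelled; this avoids the reduction step and, as you note, makes grading-preservation a side remark rather than an ingredient. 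Two small polish points: in case (ii) say explicitly that you may assume the coefficient of $e_{\alpha_1+(a-1)\alpha_2}$ is nonzero (otherwise $e$ is already a real root vector), since it is that coefficient, not $c_1$, that must be nonzero for the cancellation; and the nonvanishing of $[f_2,e_{\alpha_1+\alpha_2}]$ and $[e_2,e_{\alpha_1+(a-1)\alpha_2}]$ is cleanest by observing that $\bigoplus_{k=0}^{a}\mf[g]_{\alpha_1+k\alpha_2}$ is the irreducible $(a+1)$-dimensional module for the $\mf[sl]_2$ spanned by $e_2,f_2,\alpha_2^\vee$, which is the content of the root-string argument you sketch.
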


\begin{proof}
First, let $w=s_2s_1s_2 \dots$, $l(w) \geq 3$, and write $e=xe_\beta+ye_{\beta+\alpha_2}$, where $\beta=\alpha_1+(a-1)\alpha_2$, $0 \neq e_\beta \in \mf[g]_\beta$, $e_{\beta+\alpha_2}:= [e_2, e_\beta]$, and $0 \neq x, y \in \C$ (else $e$ is a homogeneous real root vector, in which case the result is immediate). Let $g:=\Exp (e_2) \in G^{min}$ and recall the automorphism coming from the adjoint action $\Ad (g) \in \Aut(\mf[g])$; since $\alpha_2$ is a real root, by \cite{Ku2}*{Exercise 6.2.4} this is indeed an automorphism of $\mf[g]$. But, we can write 
$$
\Ad(g)=\Ad(\Exp(e_2))=\exp(\ad_{e_2})
$$
by \cite{Ku2}*{Exercise 10.2.5}, where $\exp$ is the usual exponential map and as always $\ad_{e_2} \in \End(\mf[g])$ (this is well defined, as $e_2$ is locally nilpotent). With this in mind, we see that 
$$
e=x e_\beta+y e_{\beta+\alpha_2} = \Ad \left( \Exp \left( \frac{y}{x} e_2 \right) \right) (x e_\beta).
$$
But since $xe_\beta$ is a real root vector and can be extended into an $\mf[sl]_2$-triple, by applying the automorphism $\Ad \left( \Exp \left( \frac{y}{x} e_2 \right) \right) $ we construct an $\mf[sl]_2$-triple for $e$, as desired. \\

In the other case, let $w=s_1s_2\dots$, $l(w) \geq 2$, and write $e=xe_1+y[e_2,e_1] \in \mf[g]_w \cap \mf[g]_d = \mf[g]_{\alpha_1} \oplus \mf[g]_{\alpha_1+\alpha_2}$. Notice that $s_2(\alpha_1)=\alpha_1+a\alpha_2$ and $s_2(\alpha_1+\alpha_2)=\alpha_1+(a-1)\alpha_2$. Then by \cite{Ku2}*{Lemma 1.3.5}, $s_2$ gives rise to an automorphism $s_2(\ad) \in \Aut(\mf[g])$ satisfying 
$$
s_2(\ad): \mf[g]_{\alpha_1} \oplus \mf[g]_{\alpha_1+\alpha_2} \to \mf[g]_{\alpha_1+a\alpha_2} \oplus \mf[g]_{\alpha_1+(a-1)\alpha_2}.
$$
Further, as $\alpha_2(\dot{\tau})=0$, the action of $s_2(\ad)$ preserves the one-parameter subgroup $\tau$. Thus, we can consider $e':=s_2(\ad)(e) \in \mf[g]_{s_2 w} \cap \mf[g]_d$, which can be extended into an $\mf[sl]_2$-triple by the argument as above. Applying $s_2(\ad)^{-1}$ gives an $\mf[sl]_2$-triple for $e$, as desired.  
\end{proof}

All together, this completes the proof of the following theorem, which finishes the story for hyperbolic rank two Kac--Moody algebras. 

\begin{theorem} \label{rank 2 theorem} Let $\mf[g]=\mathcal{H}(a,b)$ for any $a, b \in \Z_{\geq 1}$ with $ab \geq 5$. Then $e \in \mathcal{H}(a,b)_w \cap \mathcal{H}(a,b)_d$ can be extended into an $\mf[sl]_2$-triple. 
\end{theorem}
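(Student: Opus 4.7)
The plan is essentially to assemble the case analysis already built up in the preceding subsections. Without loss of generality, by the symmetry $\mathcal{H}(a,b) \cong \mathcal{H}(b,a)$ (interchanging the two simple roots), we may assume $a \geq b$, so only three mutually exclusive cases need to be considered: (1) $a=b$, (2) $a>b>1$, and (3) $a>b=1$.

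In case (1), $\mf[g]=\mathcal{H}(a)$ is symmetric, and Corollary \ref{symcor} directly yields the desired $\mf[sl]_2$-triple. In case (2), Proposition \ref{permissableroots}(a) guarantees that $\mathcal{H}(a,b)_w \cap \mathcal{H}(a,b)_d$ is either zero or equal to a single real root space $\mf[g]_\beta$; in the latter case, $e$ is a nonzero scalar multiple of a real root vector $e_\beta$, which classically extends into an $\mf[sl]_2$-triple by standard $\mf[sl]_2$-theory for real roots (it is part of the subalgebra generated by $e_\beta$, $e_{-\beta}$, and $\beta^\vee$).

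In case (3), Proposition \ref{permissableroots}(b) again gives that $\mathcal{H}(a,1)_w \cap \mathcal{H}(a,1)_d$ is either a single real root space---handled as above---or corresponds to one of the two exceptional configurations (b)(i) or (b)(ii). Proposition \ref{final rank two cases} addresses precisely these remaining situations by exhibiting explicit automorphisms of $\mf[g]$ (built from $\Ad(\Exp(\cdot))$ and the lifts $s_i(\ad)$) that conjugate $e$ to a real root vector, from which an $\mf[sl]_2$-triple for $e$ is transported back via the inverse automorphism.

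Since these three cases exhaust all possibilities under $a \geq b$ and $ab \geq 5$, the theorem follows by citing Corollary \ref{symcor}, Proposition \ref{permissableroots}, and Proposition \ref{final rank two cases} in turn. No step here presents a genuine obstacle---all of the combinatorial and algebraic work has already been carried out; the only thing to be careful about is verifying that the symmetry reduction $a \geq b$ is compatible with the hypothesis that $\dot{\tau}$ is dominant (it is, since swapping simple roots corresponds to relabeling, preserving dominance of the one-parameter subgroup).
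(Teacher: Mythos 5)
Your proposal is correct and matches the paper's own argument, which likewise deduces Theorem \ref{rank 2 theorem} by combining Corollary \ref{symcor} for the symmetric case, Proposition \ref{permissableroots} for the nonsymmetric cases where the graded piece is a single real root space, and Proposition \ref{final rank two cases} for the two exceptional configurations when $a>b=1$. The extra remark about the reduction to $a\geq b$ being compatible with dominance of $\dot{\tau}$ is a reasonable detail that the paper leaves implicit.
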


\section{$\pi$-systems of real roots and regular gradings} \label{S4}

\subsection{$\pi$-systems and regular subalgebras} In this section, we recall the notion of a $\pi$-system of roots as in Carbone et. al. \cite{CRRV}. The study of $\pi$-systems in semisimple Lie algebras is due to Borel--de Siebenthal \cite{BdS} and Dynkin \cite{Dyn} under the more familiar name of \textit{regular semisimple subalgebras}. In the general symmetrizable setting, $\pi$-systems first appear in the works of Morita \cite{Mor} and Naito \cite{Nai}. We follow the conventions and notations as in \cite{CRRV}. We first begin with the definition.

\begin{definition} Let $\Phi$ be the root system of a symmetrizable Kac--Moody algebra $\mf[g]$. A \emph{$\pi$-system} is a finite set $\Sigma \subset \Phi_{Re}$ of distinct real roots such that $\beta_i-\beta_j \not \in \Phi$ for any $\beta_i, \beta_j \in \Sigma, \ i \neq j$. 
\end{definition}

For a $\pi$-system $\Sigma$, fix an ordering $\Sigma=\{ \beta_1, \beta_2, \dots, \beta_m\}$, and consider the $m \times m$ matrix 
$$
B:=(B)_{kj}=\beta_j(\beta^\vee_k).
$$
By root system considerations, we have that $B_{kk}=2$ for all $k=1, \dots, m$ and $B_{jk} \leq 0$ (as $\beta_k-\beta_j \not \in \Phi$), with $B_{jk}=0$ if and only if $B_{kj}=0$. That is, the matrix $B$ is a generalized Cartan matrix. We will say that the $\pi$-system $\Sigma$ is of \emph{type} $B$. Denote by $\mf[g](B)$ the Kac--Moody algebra with Cartan matrix $B$ and by $Q(B)$ its associated root lattice. Then if $Q$ is the root lattice for $\mf[g]$, we have a natural $\C$-linear, form-preserving map 
$$
q_\Sigma: Q(B) \otimes_{\Z} \C \to Q \otimes_{\Z} \C
$$
sending $\alpha_i \mapsto \beta_i$. The map $q_\Sigma$ can in fact be extended to the level of a Lie algebra homomorphism, as in the following results of \cite{CRRV} (cf. Theorem 2.5 and Proposition 2.6).

\begin{theorem} \label{pi system map} 
Let $\Sigma$ be a $\pi$-system in the root system $\Phi$ of $\mf[g]$. For each $\beta_i \in \Sigma$, choose nonzero vectors $e_{\beta_i} \in \mf[g]_{\beta_i}$ and $e_{-\beta_i} \in \mf[g]_{-\beta_i}$ such that $[e_{\beta_i}, e_{-\beta_i}]=\beta_i^\vee$. Then there exists a unique Lie algebra homomorphism 
$$
i_\Sigma: \mf[g]'(B) \to \mf[g]',
$$
where $\mf[g]':=[\mf[g], \mf[g]]$, such that $e_i \mapsto e_{\beta_i}$, $f_i \mapsto e_{-\beta_i}$, and $\alpha_i^\vee \mapsto \beta_i^\vee$. Furthermore, if $\Sigma$ is linearly-independent in $Q(A) \otimes_{\Z} \C$, one can extend the map $i_\Sigma$ to an embedding $i_\Sigma: \mf[g](B) \hookrightarrow \mf[g]$.
\end{theorem}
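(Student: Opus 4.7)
The plan is to invoke the universal property of $\mathfrak{g}'(B)$, which is generated by $\{e_i, f_i, \alpha_i^\vee\}_{i=1}^m$ subject to the Chevalley--Serre relations. I will verify that the proposed images $E_i := e_{\beta_i}$, $F_i := e_{-\beta_i}$, $H_i := \beta_i^\vee$ satisfy these relations in $\mathfrak{g}'$; uniqueness is then automatic since $i_\Sigma$ is prescribed on a generating set.

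The Chevalley-type relations are immediate from the root space decomposition: $[H_i, H_j] = 0$ because every $\beta_i^\vee$ lies in $\mathfrak{h}$; $[H_i, E_j] = \beta_j(\beta_i^\vee) E_j = B_{ij} E_j$ by the definition of $B$, and similarly for $F_j$; $[E_i, F_i] = \beta_i^\vee = H_i$ by the chosen normalization. For $i \neq j$, the bracket $[E_i, F_j]$ lies in $\mathfrak{g}_{\beta_i - \beta_j}$, which is trivial since $\beta_i \neq \beta_j$ and the $\pi$-system axiom forbids $\beta_i - \beta_j$ from being a root. The heart of the argument is the verification of the Serre relations $(\operatorname{ad}_{E_i})^{1-B_{ij}}(E_j) = 0$ for $i \neq j$: I analyze the $\beta_i$-string through $\beta_j$. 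Because $\beta_i$ is a real root, Kac's theory of root strings guarantees that this string is unbroken of the form $\{\beta_j - p\beta_i, \ldots, \beta_j + q\beta_i\} \subset \Phi$ with $p - q = \beta_j(\beta_i^\vee) = B_{ij}$. The $\pi$-system condition $\beta_j - \beta_i \notin \Phi$ forces $p = 0$, hence $q = -B_{ij}$, so $\beta_j + (1-B_{ij})\beta_i$ is neither a root nor zero. Therefore $(\operatorname{ad}_{E_i})^{1-B_{ij}}(E_j) \in \mathfrak{g}_{\beta_j + (1-B_{ij})\beta_i} = 0$, and the negative Serre relations follow symmetrically by applying the same argument to $-\beta_i, -\beta_j$.

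For the second claim, assume $\Sigma$ is linearly independent in $Q(A) \otimes_{\mathbb{Z}} \mathbb{C}$. Since $\mathfrak{g}$ is symmetrizable, the invariant form identifies each $\beta_i^\vee$ with a nonzero scalar multiple of the image of $\beta_i$ under the form, so the set $\{\beta_i^\vee\}$ inherits linear independence in $\mathfrak{h}$. I would then extend the coroot assignment $\alpha_i^\vee \mapsto \beta_i^\vee$ to a linear map $\mathfrak{h}(B) \to \mathfrak{h}$ by sending a fixed complement of $\operatorname{span}_{\mathbb{C}}\{\alpha_i^\vee\}$ in $\mathfrak{h}(B)$ into a suitable subspace of $\mathfrak{h}$, chosen so that the simple roots of $\mathfrak{g}(B)$ pull back to the $\beta_i$; this is possible precisely because of the linear independence hypothesis. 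The result is a Lie algebra homomorphism $i_\Sigma : \mathfrak{g}(B) \to \mathfrak{g}$ extending the one already constructed and injective on $\mathfrak{h}(B)$. For injectivity on all of $\mathfrak{g}(B)$, the kernel $\mathfrak{k}$ is an ideal meeting $\mathfrak{h}(B)$ trivially, and by the standard Kac--Moody structural result (see \cite{Kac}, Proposition 1.7) every such ideal is contained in the center $\mathfrak{z}(\mathfrak{g}(B)) \subset \mathfrak{h}(B)$; combined with $\mathfrak{h}(B)$-injectivity this forces $\mathfrak{k} = 0$.

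The main obstacle is the Serre-relation verification; this is the only step in which the $\pi$-system axiom is genuinely consumed, and it depends crucially on the reality of $\beta_i$ so that Kac's root string formula applies. The extension to $\mathfrak{g}(B)$ in the linearly independent case is comparatively routine but requires some bookkeeping when $B$ is degenerate, so that $\mathfrak{h}(B)$ is strictly larger than the coroot span and a consistent lift must be chosen.
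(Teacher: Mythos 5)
This statement is not proved in the paper at all: it is quoted from Carbone--Raghavan--Roy--Viswanath \cite{CRRV} (their Theorem 2.5 and Proposition 2.6), so there is no internal proof to compare against. Your reconstruction follows the same route as the cited source (going back to Morita and Naito): verify the Chevalley relations directly, get $[E_i,F_j]=0$ for $i\neq j$ from $\beta_i-\beta_j\notin\Phi\sqcup\{0\}$, and obtain the Serre relations from the $\beta_i$-string through $\beta_j$, where the $\pi$-system condition forces $p=0$, hence $q=-B_{ij}$ and $\mathfrak{g}_{\beta_j+(1-B_{ij})\beta_i}=0$. That computation is correct, including the convention $p-q=\beta_j(\beta_i^\vee)=B_{ij}$, and the negative relations do follow symmetrically since the $\pi$-system condition excludes differences in both orders.

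Two points in your write-up need repair, though both are local. First, your opening appeal to ``the universal property of $\mathfrak{g}'(B)$'' assumes that $\mathfrak{g}'(B)$ is presented by the Chevalley--Serre relations. Under the definition used in \cite{Kac} and \cite{CRRV} (quotient of the auxiliary algebra by the largest ideal meeting the Cartan trivially), that presentation is the Gabber--Kac theorem, which is available only for symmetrizable $B$; so you must note that $B$ is symmetrizable, which holds because $B_{kj}=2(\beta_j|\beta_k)/(\beta_k|\beta_k)$, i.e.\ $B$ is symmetrized by the diagonal matrix with entries $(\beta_k|\beta_k)/2$ --- this is precisely where symmetrizability of the ambient $\mathfrak{g}$ enters. (If you instead adopt the generators-and-relations definition as in \cite{Ku2}, say so explicitly; either way the step is not free.) Second, you assert that the extension $\phi:\mathfrak{h}(B)\to\mathfrak{h}$ can be chosen injective but give no argument. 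In fact any linear $\phi$ with $\phi(\alpha_i^\vee)=\beta_i^\vee$ and $\beta_j\circ\phi=\alpha_j$ for all $j$ (such $\phi$ exist because the $\beta_j$ are linearly independent functionals on $\mathfrak{h}$) is automatically injective: if $\phi(x)=0$ then $\alpha_j(x)=\beta_j(\phi(x))=0$ for all $j$, so $x$ lies in the common kernel of the simple roots of $B$, which is contained in the span of the $\alpha_i^\vee$ (the center of $\mathfrak{g}(B)$ lies in the coroot span), and there injectivity follows from the linear independence of the $\beta_i^\vee$, which the invariant form deduces from that of the $\beta_i$. Finally, the structural fact you want for the kernel is best stated as: $\mathfrak{g}(B)$ has no nonzero ideal meeting $\mathfrak{h}(B)$ trivially (immediate from its construction as the quotient by the maximal such ideal); your phrasing via the center is slightly off but reaches the same conclusion. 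With these repairs your argument is complete and is essentially the proof in \cite{CRRV}.
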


Throughout, we will always use linearly-independent $\pi$-systems, so that we are ensured by Theorem \ref{pi system map} a Lie algebra embedding. Further, note that since $q_\Sigma$ is form-preserving, we have if $\Phi(B)$ is the root system of $\mf[g](B)$ for a $\pi$-system of type $B$ that $q_\Sigma(\Phi(B)_{Re}) \subset \Phi_{Re}$ and $q_\Sigma(\Phi(B)_{Im}) \subset \Phi_{Im} \sqcup \{0\}$ (cf. \cite{CRRV}, Corollary 2.12). 

In the following proposition, we make a first connection between linearly-independent $\pi$-systems of type $B$, where $B$ is a \textit{non-degenerate} Cartan matrix, and $\mf[sl]_2$-triples. The construction is identical to that of the so-called \textit{principle $\mf[sl]_2$-triple} for semisimple Lie algebras $\mf[g]$ as in \cite{Kos}.

\begin{proposition} \label{pi reg} Let $\Sigma$ be a linearly independent $\pi$-system of type $B$, where $\det(B) \neq 0$. Let $e=\sum_{\beta \in \Sigma} e_\beta$ where each $0 \neq e_\beta \in \mf[g]_\beta$. Then $e$ can be extended into an $\mf[sl]_2$-triple. 
\end{proposition}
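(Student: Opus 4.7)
The plan is to adapt Kostant's construction of the principal $\mf[sl]_2$-triple to this setting, with the nondegeneracy of $B$ playing the role that semisimplicity plays in the classical case. Concretely, the goal is to exhibit a triple $(e, h, f)$ of the form
$$
h = \sum_{i=1}^m c_i \beta_i^\vee, \qquad f = \sum_{i=1}^m c_i e_{-\beta_i},
$$
for appropriate scalars $c_i \in \C$ and appropriately normalized vectors $e_{-\beta_i} \in \mf[g]_{-\beta_i}$.

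First, since $B_{ij} = \beta_j(\beta_i^\vee)$ and $\det(B) \neq 0$, the linear system $\sum_{i=1}^m c_i B_{ij} = 2$ for $j = 1, \dots, m$ admits a unique solution $(c_1, \dots, c_m) \in \C^m$, and with $h$ defined as above this forces $\beta_j(h) = 2$ for every $j$. Next, using that $[\mf[g]_{\beta_i}, \mf[g]_{-\beta_i}] = \C \beta_i^\vee$ and that the given $e_{\beta_i}$ is nonzero, rescale any nonzero element of $\mf[g]_{-\beta_i}$ to obtain $e_{-\beta_i}$ satisfying $[e_{\beta_i}, e_{-\beta_i}] = \beta_i^\vee$, and then define $f$ as above.

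With these choices, the weight relations $[h, e] = 2e$ and $[h, f] = -2f$ are immediate from $\beta_j(h) = 2$. For the remaining relation, expand
$$
[e, f] = \sum_{i, j} c_j [e_{\beta_i}, e_{-\beta_j}].
$$
The diagonal ($i = j$) terms contribute $\sum_i c_i \beta_i^\vee = h$. The cross terms ($i \neq j$) vanish: distinctness of the $\beta_i$ gives $\beta_i - \beta_j \neq 0$, and the $\pi$-system condition gives $\beta_i - \beta_j \notin \Phi$, so $[e_{\beta_i}, e_{-\beta_j}] \in \mf[g]_{\beta_i - \beta_j} = 0$.

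The only nontrivial input is the vanishing of these cross terms, which is essentially built into the definition of a $\pi$-system; the rest of the argument is the Kostant-style linear algebra. An equivalent conceptual viewpoint is that the entire construction lives inside the image of the embedding $i_\Sigma: \mf[g](B) \hookrightarrow \mf[g]$ from Theorem \ref{pi system map}: nondegeneracy of $B$ produces a principal $\mf[sl]_2$-triple around $\sum_i e_i \in \mf[g]'(B)$, and its image under $i_\Sigma$ yields exactly our $(e, h, f)$ after the normalization of the $e_{-\beta_i}$ above. Thus the main expected obstacle reduces to confirming that the two hypotheses ($\pi$-system and $\det B \neq 0$) already force the Serre-type computation and the solvability of the principal system, which the calculation above verifies.
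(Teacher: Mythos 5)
Your proposal is correct and is essentially the paper's own proof: the paper likewise solves $B^T\vec{\mu}=(2,\dots,2)^T$ to define $h=\sum\mu_i\beta_i^\vee$ and $f=\sum\mu_i e_{-\beta_i}$ with $[e_{\beta_i},e_{-\beta_i}]=\beta_i^\vee$, exactly as you do. You additionally spell out the verification (in particular the vanishing of cross terms $[e_{\beta_i},e_{-\beta_j}]$ via the $\pi$-system condition) that the paper summarizes as ``by construction,'' which is a faithful filling-in rather than a different route.
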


\begin{proof}
By definition, fix an ordering $\Sigma=\{\beta_i\}_{i=1}^{|\Sigma|}$ and $B$ the associated Cartan matrix as before. Since $\Sigma$ is linearly independent and non-degenerate, define $h \in \mf[h]$ via 
$$
h:= \sum_{\beta_i \in \Sigma} \mu_i \beta_i^\vee,
$$
where the coefficients $\mu_i$ are chosen such that $\beta_j(h)=2$ for all $j$; this is possible, as $B$ is invertible and this corresponds to the linear system $B^T \cdot \vec{\mu} = \begin{pmatrix} 2  \\ \vdots \\ 2 \end{pmatrix}$. Now, define $f \in \mf[g]$ by 
$$
f:= \sum_{\beta_i \in \Sigma} \mu_i e_{-\beta_i}
$$
where as before $e_{-\beta_i} \in \mf[g]_{-\beta_i}$ satisfies $[e_{\beta_i}, e_{-\beta_i}]=\beta_i^\vee$. Then by construction we have $\{e, h, f\}$ forms an $\mf[sl]_2$-triple.
\end{proof}

\begin{remark} The non-degeneracy of $B$ in the above proposition is crucial. For example, if $\mf[g]=A_n^{(1)}$, untwisted affine $\mf[sl]_{n+1}$, and we take the linearly-independent $\pi$-system $\Sigma=\{\alpha_0, \alpha_1, \dots, \alpha_n\}$, then the element $e:=e_0+e_1+\cdots+e_n$ \textbf{cannot} be made into an $\mf[sl]_2$-triple, but instead can be extended into a three-dimensional Heisenberg subalgebra  $\langle e, f, K \rangle$ where $f=f_0+f_1+\cdots +f_n$ and $K \in \mf[g]$ is the canonical central element of $\mf[g]$. 
\end{remark}

\subsection{Regular-dominant one-parameter subgroups and $\mf[sl]_2$-triples} 
Now, we employ this machinery in the context of our chosen elements $e \in \mf[g]_w \cap \mf[g]_d$, with an additional restriction. For the remainder of this section, we fix as before a Weyl group element $w \in W$ and one-parameter subgroup $\tau \subset T \subset G^{min}$; now, we include the stronger condition that $\tau$ is \textit{regular-dominant}, so that $\alpha_i(\dot{\tau}) >0$ for all $i$. In this case, we will say that the grading on $\mf[g]$ coming from $\tau$ is a \emph{regular grading}. Fix an integer $d \geq 1$, and consider the subset of real positive roots $\Phi_w^d:=\{\beta \in \Phi_w: \beta(\dot{\tau})=d\}$. A first observation which follows from our strengthened assumptions is the following lemma.

\begin{lemma} \label{pi system lemma} For a regular grading, $\Phi_w^d$ is a $\pi$-system. 
\end{lemma}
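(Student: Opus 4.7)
The plan is to verify the three defining conditions of a $\pi$-system for the set $\Sigma := \Phi_w^d$: finiteness, containment in $\Phi_{Re}$, and the non-subtractability condition $\beta_i - \beta_j \notin \Phi$ for distinct elements.

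First I would dispose of the easy conditions. Containment $\Phi_w^d \subset \Phi_{Re}$ follows from the general fact that $\Phi_w \subset \Phi_{Re}$: any positive imaginary root is sent by every element of $W$ to another positive imaginary root (Weyl group elements preserve the cone of positive imaginary roots), so no positive imaginary root can lie in $\Phi_w$. Finiteness is immediate from the setup, since $\tau$ was chosen precisely so that each graded component $\mf[g]_n$ is finite-dimensional; hence there are only finitely many roots $\beta$ with $\beta(\dot{\tau}) = d$, and so $\Phi_w^d$ is finite.

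The heart of the argument is the condition $\beta_i - \beta_j \notin \Phi$. Suppose $\beta_i, \beta_j \in \Phi_w^d$ are distinct, and set $\gamma := \beta_i - \beta_j \neq 0$. Then $\gamma(\dot{\tau}) = d - d = 0$. Assume for contradiction that $\gamma \in \Phi$; writing $\gamma = \sum_k n_k \alpha_k$ in the simple root basis, the coefficients $n_k$ all have the same sign (and not all zero) because every root is either positive or negative. Under the regular-dominance hypothesis, $\alpha_k(\dot{\tau}) > 0$ for every $k$, so $\gamma(\dot{\tau}) = \sum_k n_k \alpha_k(\dot{\tau})$ is either strictly positive or strictly negative, contradicting $\gamma(\dot{\tau}) = 0$. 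Therefore $\gamma \notin \Phi$.

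Since all three conditions hold, $\Phi_w^d$ is a $\pi$-system. There is no serious obstacle here: the lemma is essentially a packaging statement showing that the regular-dominance strengthening of $\tau$ is exactly what forces elements of the degree-$d$ slice to be pairwise incomparable in the root-difference sense. The only subtlety worth being careful about is distinguishing between the \emph{difference} $\beta_i - \beta_j$ and the \emph{formal} linear combination in the root lattice, but since we only need to rule out $\gamma$ being a root, the sign-coherence of root coefficients handles this cleanly.
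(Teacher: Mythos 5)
Your proof is correct and the central argument — that for distinct $\beta_i,\beta_j \in \Phi_w^d$ the difference pairs to zero against $\dot{\tau}$, while regular dominance forces any root to pair strictly positively or negatively — is exactly the paper's argument. The extra verifications of finiteness and of $\Phi_w \subset \Phi_{Re}^+$ (via $W$-invariance of the positive imaginary roots) are points the paper dispatches with ``by construction,'' so your write-up is simply a slightly more detailed version of the same proof.
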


\begin{proof} Suppose we have $\alpha, \beta \in \Phi_w^d$ such that $\alpha-\beta \in \Phi$. Without loss of generality, suppose $\alpha-\beta$ is a positive root. Then $(\alpha-\beta)(\dot{\tau}) >0$ by the regularity of $\tau$. But, $\alpha(\dot{\tau})=\beta(\dot{\tau})=d$, so that $\alpha(\dot{\tau})-\beta(\dot{\tau})=0$, a contradiction. Thus, no pairwise differences of roots in $\Phi_w^d$ can be a root, hence $\Phi_w^d$ is a $\pi$-system ($\Phi_w^d \subset \Phi^+_{Re}$ by construction). 
\end{proof}

By Proposition \ref{pi reg}, if we could show that $\Phi_w^d$ is linearly-independent, we can extend the associated ``principle nilpotent"-like element into an $\mf[sl]_2$-triple. However, we in fact have the stronger result for such $\Phi_w^d$, given by the following proposition.

\begin{proposition} \label{reg grade}
For a regular grading, the $\pi$-system $\Phi_w^d$ is of finite type. That is, $\mf[g](B)$ is a finite-dimensional semisimple Lie algebra. 
\end{proposition}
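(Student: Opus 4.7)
The plan is to realize $\mf[g](B)$ as a finite-dimensional subalgebra of $\mf[g]$ via the embedding supplied by Theorem \ref{pi system map}, forcing $B$ to be of finite type.

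First, observe that $\Sigma := \Phi_w^d$ is finite, as it is contained in the inversion set $\Phi_w$ of cardinality $\ell(w)$. Consider the Lie subalgebra $\mf[a] \subset \mf[g]$ generated by $\{e_\beta, e_{-\beta}, \beta^\vee : \beta \in \Sigma\}$. The positive part of $\mf[a]$ lies in $\mf[g]_w$, which is finite-dimensional (being a finite direct sum of finite-dimensional root spaces); the Chevalley involution of $\mf[g]$ then gives the analogous bound $\bigoplus_{\beta \in \Phi_w} \mf[g]_{-\beta}$ on the negative part. Hence $\mf[a]$ is finite-dimensional.

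The next — and crucial — step is to show $\Sigma$ is linearly independent in $Q \otimes_{\Z} \C$. Granted this, Theorem \ref{pi system map} promotes $i_\Sigma$ to an embedding $\mf[g](B) \hookrightarrow \mf[g]$ with image $\mf[a]$; since $\mf[a]$ is finite-dimensional, so is $\mf[g](B)$, which is equivalent to $B$ being of finite type.

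To establish linear independence, I would analyze the Gram matrix $G = ((\beta_i | \beta_j))$, which is symmetric with $G_{ii} > 0$ and $G_{ij} \leq 0$ for $i \neq j$ (from the $\pi$-system property and realness of the $\beta_i$). For such ``Stieltjes-type'' matrices, positive-definiteness is equivalent to the existence of a componentwise-positive vector $v$ with $Gv$ componentwise positive. Let $\lambda \in \mf[h]^*$ correspond to $\dot{\tau}$ under the invariant form, so $(\lambda | \beta_i) = \beta_i(\dot{\tau}) = d > 0$ for every $i$; projecting $\lambda$ onto $\mathrm{span}_\R(\Sigma)$ and reading off its coefficients in the basis $\{\beta_i\}$ furnishes the candidate $v$. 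Positive-definiteness of $G$ would then yield both the linear independence of $\Sigma$ and the finite-typeness of $B$ simultaneously.

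The main obstacle is verifying that the coefficients of the projection of $\lambda$ are strictly positive, which is where the full force of the regular-dominance hypothesis should enter. A cleaner contrapositive route is also available: if $B$ were not of finite type, $\mf[g](B)$ would have an infinite Weyl group and therefore infinitely many positive real roots; under $q_\Sigma$ (injective when $\Sigma$ is linearly independent) these would embed into the closed finite set $\Phi_w$, a contradiction. This reduces the whole proposition to the linear independence of $\Sigma$.
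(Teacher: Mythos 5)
Your argument correctly reduces everything to the linear independence of $\Sigma=\Phi_w^d$ (granted that, the finite-dimensionality of the subalgebra generated by the $e_{\pm\beta}$, $\beta\in\Sigma$, together with the embedding of Theorem \ref{pi system map}, does force $B$ to be of finite type). But the route you propose to linear independence is a genuine gap, and it cannot be repaired as stated: the only property of $\Sigma$ you feed into the Gram-matrix argument is the existence of $\lambda$ with $(\lambda\,|\,\beta_i)=d>0$ for all $i$, and that hypothesis does not imply positive-definiteness of $G=((\beta_i\,|\,\beta_j))$. For a concrete obstruction, take $\mathcal{H}(3)$ with $\beta_1=s_1\alpha_2=3\alpha_1+\alpha_2$ and $\beta_2=s_2\alpha_1=\alpha_1+3\alpha_2$: their difference is not a root, so $\{\beta_1,\beta_2\}$ is a $\pi$-system; the regular-dominant choice $\alpha_1(\dot\tau)=\alpha_2(\dot\tau)=1$ gives $\beta_1(\dot\tau)=\beta_2(\dot\tau)=4$; yet $(\beta_i\,|\,\beta_i)=2$ while $(\beta_1\,|\,\beta_2)=-18$, so $G$ is indefinite, no componentwise-positive $v$ with $Gv>0$ exists, and the solution of $Gv=(d,d)^T$ (your projected $\lambda$) has strictly \emph{negative} coefficients. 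This does not contradict the proposition only because $\{\beta_1,\beta_2\}$ is contained in no single inversion set $\Phi_w$ --- which is precisely the hypothesis your linear-independence step never uses. (A smaller omission: your claim that the positive part of $\mathfrak{a}$ lies in $\mathfrak{g}_w$ needs $\Z_{\geq 0}\Sigma\cap\Phi\subset\Phi_w$, i.e.\ closure under arbitrary nonnegative combinations, which requires the short argument with $w^{-1}$ and not just pairwise closedness.)

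The paper's proof runs in the opposite logical direction and never assumes linear independence: it invokes Kac's criterion that a generalized Cartan matrix is of finite type if and only if its root system has no imaginary roots, and kills imaginary roots directly. The two ingredients are (i) $\Z_{\geq 0}\Phi_w\cap\Phi\subset\Phi_w\subset\Phi^+_{Re}$, so that for a positive imaginary root $\gamma=\sum n_i\alpha_i$ of $\Phi(B)$ the image $q_\Sigma(\gamma)=\sum n_i\beta_i$, which by \cite{CRRV} must lie in $\Phi^+_{Im}\sqcup\{0\}$, would be a \emph{real} root if nonzero, forcing $q_\Sigma(\gamma)=0$; and (ii) evaluating on $\dot\tau$ to get $0=d\sum n_i$, hence $n_i=0$ for all $i$. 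Linear independence of $\Sigma$ is then deduced \emph{afterwards} from $\det(B)\neq 0$. Your contrapositive sketch (infinitely many positive real roots of $\mathfrak{g}(B)$ injecting into the finite set $\Phi_w$) is viable, but it still presupposes the injectivity of $q_\Sigma$, so it does not escape the same missing step; the mechanism that actually closes the argument is ingredient (i), the interplay between the inversion set and the real/imaginary dichotomy, which is absent from your proposal.
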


\begin{proof}
To show that $\Phi(B)$ is of finite type, we use the classification as in \cite{Kac}*{Chapter 5} that a root system is of finite type if and only if it has no imaginary roots.

First, we claim that $\Z_{\geq 0}. \Phi_w \cap \Phi \subset \Phi_w$. Indeed, if we have $\gamma \in \Z_{\geq 0} \Phi_w \cap \Phi$, necessarily $\gamma \in \Phi^+$. Writing $\gamma = \sum_{j} n_j \gamma_j$ for $\gamma_j \in \Phi_w$, since $w^{-1}(\gamma_j) \in \Phi^-$ and $n_j \geq 0$ for all $j$, we have that $w^{-1}(\gamma) \in \Phi^-$, so that by definition $\gamma \in \Phi_w$. 

Now, let $\Sigma=\Phi_w^d$ and suppose that $\gamma:= \sum_i n_i \alpha_i \in \Phi(B)$ is an imaginary root; without loss of generality, we take $\gamma$ to be positive. By \cite{CRRV} Corollary 2.12 (as in the discussion following Theorem \ref{pi system map} above), we have that 
$$
q_\Sigma(\gamma)=\sum_i n_i \beta_i \in \Phi^+_{Im} \sqcup \{0\}.
$$
However, as this is a nonnegative combination of roots in $\Phi_w$, if $q_\Sigma(\gamma) \neq 0$, we have that $q_\Sigma(\gamma) \in \Phi_w \subset \Phi^+_{Re}$, a contradiction. Thus necessarily $q_\Sigma(\gamma)=0$. 

Finally, evaluating on $\dot{\tau}$, we see that 
$$
0=\sum_i n_i \beta_i(\dot{\tau}) = \sum_i n_i (d) = d\sum_i n_i \implies \sum_i n_i =0,
$$
as $d \geq 1$. But since each $n_i \geq 0$, we get that $n_i=0$ for all $i$, so that $\gamma=0$. Thus $\Phi(B)$ has no imaginary roots, so is of finite type. 
\end{proof}

\begin{remark} Since $\Phi^d_w$ is a $\pi$-system of finite type with Cartan matrix $B$, we know that $\det(B) \neq 0$ so that $\Phi_w^d$ is linearly-independent and gives rise to an embedding $\mf[g](B) \hookrightarrow \mf[g]$. 
\end{remark}

With this, we can now state the main result of this section. 

\begin{theorem} \label{regdomthm} Let $\mf[g]$ be a symmetrizable Kac--Moody algebra, $w \in W$ a Weyl group element, and $\tau \subset T \subset G^{min}$ a regular-dominant one-parameter subgroup. Fix $d \geq 1$, and a nonzero element $e \in \mf[g]_w \cap \mf[g]_d$. Then $e$ can be extended into an $\mf[sl]_2$-triple. 
\end{theorem}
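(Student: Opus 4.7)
The plan is to reduce the statement directly to Proposition \ref{pi reg} by passing to the support of $e$ inside $\Phi_w^d$. Since $\mf[g]_w \cap \mf[g]_d = \bigoplus_{\beta \in \Phi_w^d} \mf[g]_\beta$, any nonzero element $e$ decomposes uniquely as $e = \sum_{\beta \in \Phi_w^d} e_\beta$ with $e_\beta \in \mf[g]_\beta$. I would set $\Sigma := \{\beta \in \Phi_w^d : e_\beta \neq 0\}$ and aim to apply Proposition \ref{pi reg} to this $\Sigma$, which precisely requires each $e_\beta$ in the sum to be nonzero.

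The substantive step is verifying that $\Sigma$ inherits the three hypotheses needed for Proposition \ref{pi reg}. Being a $\pi$-system is inherited trivially, as the defining condition $\beta_i - \beta_j \notin \Phi$ is preserved under passage to subsets (so Lemma \ref{pi system lemma} immediately yields that $\Sigma$ is a $\pi$-system). For finite-typeness, I would either invoke the standard fact that a sub-Dynkin diagram of a finite-type diagram is again of finite type, so that the Cartan matrix $B'$ of $\Sigma$ is a principal submatrix of the finite-type matrix $B$ of $\Phi_w^d$ supplied by Proposition \ref{reg grade}, or simply rerun the argument of Proposition \ref{reg grade} verbatim with $\Sigma$ in place of $\Phi_w^d$ (the proof uses only that the elements of the $\pi$-system lie in $\Phi_w$ and evaluate to $d$ on $\dot{\tau}$, both of which hold for $\Sigma$). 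Either way, $B'$ is of finite type and hence $\det B' \neq 0$; linear independence of $\Sigma$ then follows from form-preservation of $q_\Sigma$ exactly as in the remark following Proposition \ref{reg grade}.

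With all hypotheses in place, Proposition \ref{pi reg} applies to $e = \sum_{\beta \in \Sigma} e_\beta$ and produces the desired $\mf[sl]_2$-triple $\{e, h, f\}$ explicitly, with $h = \sum_i \mu_i \beta_i^\vee$ for coefficients $\mu_i$ solving $(B')^T \vec{\mu} = (2, \dots, 2)^T$, and $f$ the matching combination of negative root vectors. I expect no serious obstacle at this stage: the genuine work --- that regular-dominance combined with homogeneity in a fixed positive degree $d$ forces $\Phi_w^d$ to contain no imaginary roots and hence be of finite type --- is already packaged in Proposition \ref{reg grade}, and the only care required here is the reduction to the support of $e$, ensuring that the Kostant-style construction of Proposition \ref{pi reg} applies with every coefficient nonzero.
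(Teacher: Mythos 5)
Your proof is correct, and it shares the paper's combinatorial core --- Lemma \ref{pi system lemma} and Proposition \ref{reg grade}, which is where regular-dominance does the real work --- but the endgame is genuinely different. The paper does not pass to the support of $e$: it keeps the full $\pi$-system $\Phi_w^d$, uses the embedding $i_\Sigma\colon \mathfrak{g}(B)\hookrightarrow \mathfrak{g}$ from Theorem \ref{pi system map} (valid since $\det B\neq 0$ gives linear independence), observes that $e$ lies in the image and that its preimage is a nilpotent element of the finite-dimensional semisimple algebra $\mathfrak{g}(B)$, and then invokes the classical Jacobson--Morozov theorem there, transporting the resulting triple back through $i_\Sigma$. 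You instead shrink to the support $\Sigma$ of $e$, so that $e$ becomes a sum of ``simple'' root vectors with all coefficients nonzero (a principal-type nilpotent for the smaller subalgebra), and apply the explicit Kostant-style construction of Proposition \ref{pi reg} directly. Your reductions are sound: a subset of a $\pi$-system is a $\pi$-system, a principal submatrix of a finite-type Cartan matrix is again of finite type (or, as you note, the proof of Proposition \ref{reg grade} runs verbatim for any subset of $\Phi_w^d$), and $\det B'\neq 0$ yields both linear independence and solvability of $(B')^{T}\vec{\mu}=(2,\dots,2)^{T}$. What your route buys is explicitness and economy: $h$ and $f$ are produced by solving a linear system, and you never need the classical Jacobson--Morozov theorem, nor strictly even the embedding of Theorem \ref{pi system map}, since Proposition \ref{pi reg} is proved by direct bracket computation. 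What the paper's route buys is uniformity: all nonzero $e\in\mathfrak{g}_w\cap\mathfrak{g}_d$ are treated at once with no case analysis on vanishing coefficients, at the cost of quoting the finite-dimensional theorem as a black box.
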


\begin{proof}
For such an $e$, we can write $e=\sum_{\beta_i \in \Phi_w^d} e_{\beta_i}$ for some (not necessarily all nonzero) $e_{\beta_i} \in \mf[g]_{\beta_i}$. By Lemma \ref{pi system lemma} and Proposition \ref{reg grade}, we have that $\Phi_w^d$ is a $\pi$-system of finite type corresponding to an embedding $i_\Sigma: \mf[g](B) \hookrightarrow \mf[g]$, with $e \in i_\Sigma(\mf[g](B))$. Note that the preimage $i_\Sigma^{-1}(e)$ is nilpotent in $\mf[g](B)$, as $e$ is locally nilpotent in $\mf[g]$. As this is a Lie algebra embedding and $\mf[g](B)$ is a finite-dimensional semisimple Lie algebra, we can apply the classical Jacobson--Morozov theorem to $i^{-1}_\Sigma(e)$ to construct an $\mf[sl]_2$-triple, which is then transported via $i_\Sigma$ to an $\mf[sl]_2$-triple for $e$.

\end{proof}

\begin{remark} For both Theorems \ref{rank 2 theorem} and \ref{regdomthm}, the construction of an $\mf[sl]_2$-triple for $e \in \mf[g]_w \cap \mf[g]_d$ followed from the classical Jacobson--Morozov theorem, by appropriately recognizing the element $e$ in a natural finite-dimensional semisimple subalgebra. We do not know if this behavior should continue in the general case. 
\end{remark}

\begin{remark} Question \ref{Q1}, as stated for a general symmetrizable $\mf[g]$ and dominant one-parameter subgroup $\tau$, remains open. In an appendix to our Ph.D. thesis \cite{Jer}, we record an argument of Kumar \cite{Ku} (adapting the proof in the finite-dimensional semisimple case as given by Kostant \cite{Kos}) which answers Question \ref{Q1} up to an (unresolved) sufficient condition. We do not include that argument here, for brevity. 
\end{remark}

\begin{bibdiv}
\begin{biblist}

\bib{AK}{article}{
	AUTHOR={Andr\'e, Y.}
	AUTHOR={Kahn, B.}
	TITLE={Nilpotence, radicaux et structure mono\"idales}
	JOURNAL={Rend. Semin. Mat. Univ. Padova}
	VOLUME={108}
	YEAR={2002}
}

\bib{BdS}{article}{
	AUTHOR={Borel, A.}
	AUTHOR={de Siebenthal, J.}
	TITLE={Les sous-groupes ferm\'es de rang maximum de groupes de Lie clos.}
	JOURNAL={Comment. Math. Helv.}
	VOLUME={23}
	YEAR={1949}
}


\bib{CG}{book}{
	AUTHOR={Chriss, N.}
	AUTHOR={Ginzburg, V.}
	TITLE={Representation theory and complex geometry}
	SERIES={Modern Birkh\"auser Classics}
	PUBLISHER={Birkh\"auser}
	YEAR={1997}
}

\bib{CKMS}{article}{
	AUTHOR={Carbone, L.}
	AUTHOR={Kownacki, M.}
	AUTHOR={Murray, S.}
	AUTHOR={Srinivasan, S.}
	TITLE={Commutator relations and structure constants for rank 2 Kac--Moody algebras}
	JOURNAL={J. of Algebra}
	VOLUME={566}
	YEAR={2021}
}

\bib{CM}{book}{
	AUTHOR={Collingwood, D. H.}
	AUTHOR={McGovern, W. M.}
	TITLE={Nilpotent orbits in semisimple Lie algebras}
	PUBLISHER={Van Nostrand Reinhold}
	YEAR={1993}
}

\bib{CRRV}{article}{
	AUTHOR={Carbone, L.}
	AUTHOR={Raghavan, K.N.}
	AUTHOR={Roy, K.}
	AUTHOR={Viswanath, S.}
	TITLE={$\pi$-systems of symmetrizable Kac--Moody algebras}
	JOURNAL={Letters in Mathematical Physics}
	VOLUME={111}
	YEAR={2021}
}

\bib{Dyn}{article}{
	AUTHOR={Dynkin, E.B.}
	TITLE={Semisimple subalgebras of semisimple Lie algebras}
	JOURNAL={In: Yushkevich, A.A., Seitz, G.M., Onishchik, A.L. (eds.) Selected Papers of E. B. Dynkin with Commentary. American Methematical Society, Providence, RI}
	YEAR={2000}
}

\bib{Fei}{article}{
	AUTHOR={Feingold, A.}
	TITLE={A hyperbolic GCM Lie algebra and the Fibonacci numbers}
	JOURNAL={Proceedings of the American Mathematical Society}
	VOLUME={80}
	YEAR={1980}
}

\bib{Hum}{book}{
	AUTHOR={Humphreys, J. E.}
	TITLE={Introduction to Lie algebras and representation theory}
	SERIES={Graduate texts in mathematics}
	VOLUME={9}
	PUBLISHER={Springer-Verlag}
	YEAR={1972}
}

\bib{Jac}{article}{
	AUTHOR={Jacobson, N.}
	TITLE={Rational methods in the theory of Lie algebras}
	JOURNAL={Ann. of Math.}
	VOLUME={36}
	YEAR={1935}
}

\bib{Jer}{article}{
	AUTHOR={Jeralds, S.} 
	TITLE={Root components and the tensor decomposition problem for affine Lie algebras}
	JOURNAL={Ph.D. Thesis, University of North Carolina at Chapel Hill}
	YEAR={2021}
}

\bib{Kac}{book}{
	AUTHOR={Kac, V.}
	TITLE={Infinite dimensional Lie algebras}
	PUBLISHER={Cambridge University Press}
	YEAR={1990}
}

\bib{Kos}{article}{
	AUTHOR={Kostant, B.}
	TITLE={The principal three-dimensional subgroup and the Betti numbers of a complex simple Lie group}
	JOURNAL={American Journal of Mathematics}
	VOLUME={81}
	YEAR={1959}
}

\bib{Ku2}{book}{
	AUTHOR={Kumar, S.}
	TITLE={Kac--Moody groups, their flag varieties, and representation theory}
	PUBLISHER={Birkh\"auser}
	SERIES={Progress in Mathematics}
	VOLUME={204}
	YEAR={2002}
}

\bib{Ku}{article}{
	AUTHOR={Kumar, S.}
	TITLE={On Jacobson--Morozov theorem for symmetrizable Kac--Moody algebras}
	JOURNAL={unpublished private communication}
	YEAR={2018}
}

\bib{Mor}{article}{
	AUTHOR={Morita, J.}
	TITLE={Certain rank two subsystems of Kac--Moody root systems}
	JOURNAL={In: Infinite-dimensional Lie algebras and groups (Luminy--Marseille, 1988). Advanced Series in Mathematical Physics, vol. 7. World Scientific Publishin, Teaneck, NJ}
	YEAR={1989}
}

\bib{Nai}{article}{
	AUTHOR={Naito, S.}
	TITLE={On regular subalgebras of Kac--Moody algebras and their associated invariant forms}
	JOURNAL={J. Math. Soc. Japan}
	VOLUME={44}
	YEAR={1992}
}

\bib{OSul}{article}{
	AUTHOR={O'Sullivan, P.}
	TITLE={The generalised Jacobson--Morozov theorem}
	JOURNAL={Memoirs of the American Mathematical Society}
	VOLUME={207}
	YEAR={2010}
}

\bib{Prem}{article}{
	AUTHOR={Premet, A.}
	TITLE={An analogue of the Jacobson--Morozov theorem for Lie algebras of reductive groups of good characteristics.}
	JOURNAL={Trans. Amer. Math. Soc.}
	VOLUME={347}
	YEAR={1995}
}

\bib{Pom}{article}{
	AUTHOR={Pommerening, K.}
	TITLE={\"Uber die unipotenten Klassen reduktiver Gruppen II.}
	JOURNAL={J. Algebra}
	VOLUME={65}
	YEAR={1980}
}

\bib{Res}{article}{
	AUTHOR={Ressayre, N.}
	TITLE={On the tensor semigroup of affine Kac--Moody Lie algebras}
	JOURNAL={preprint, arXiv: 1110.4621}
	YEAR={2011}
}

\bib{Tsu}{article}{
	AUTHOR={Tsurusaki, H.}
	TITLE={$\mf[sl]_2$ triples whose nilpositive elements are in a space which is spanned by the real root vectors in rank 2 symmetric hyperbolic Kac--Moody Lie algebras}
	JOURNAL={preprint, arXiv: 2107.05234}
	YEAR={2021}
}

\end{biblist}
\end{bibdiv}

%
%

\end{document}